\theoremstyle{plain}
\newtheorem{theorem}{Theorem}[section]
\newtheorem{lemma}[theorem]{Lemma}
\newtheorem{prop}[theorem]{Proposition}
\numberwithin{equation}{section}
\newcommand{\N}{\mathbb{N}}
\newcommand{\R}{\mathbb{R}}
\newcommand{\T}{\mathbb{T}}
\newcommand{\Z}{\mathbb{Z}}
\newcommand{\Lip}{{\rm Lip\,}}
\newcommand{\ep}{\varepsilon}
\newcommand{\ol}{\overline}
\begin{document}
\baselineskip=15pt

\title[Quantitative homogenization of first-order ODEs]{Quantitative homogenization of first-order ODEs}
\author{Panrui Ni}
\address[P. Ni]{
Department 1: Graduate School of Mathematical Sciences, University of Tokyo, 3-8-1 Komaba, Meguro-ku, Tokyo 153-8914, Japan; Department 2: Shanghai Center for Mathematical Sciences, Fudan University, Shanghai 200438, China}
\email{panruini@g.ecc.u-tokyo.ac.jp}

\makeatletter
\@namedef{subjclassname@2020}{\textup{2020} Mathematics Subject Classification}
\makeatother

\date{\today}
\keywords{Homogenization, ordinary differential equations, error estimates}
\subjclass[2020]{
35B27,
34A34,
65L70}

%%%%%%%%%%%%%%%%%%%%%%%%%

\begin{abstract}

This paper investigates the quantitative homogenization of first-order ODEs. For single-scale scalar ODEs, we obtain a sharp $O(\ep)$ convergence rate and characterize the effective constant. In the multi-scale setting, our results match those of \cite{IM} for long times but improve the short-time error to $O(\ep)$. We also initiate the study of quasi-periodic homogenization in this context. The scalar framework is further extended to higher dimensions under a boundedness assumption on trajectories. For weakly coupled systems with fast switching rates, we obtain for the first time a convergence rate of order $O(\ep)$. These results have applications to linear transport equations and broader connections to PDEs and gradient systems.

\end{abstract}

\maketitle

%%%%%%%%%%%%%%%%%%%%%%%%%

\section{Introduction}

In this paper, we study the quantitative homogenization of scalar and vector-valued ODEs of the form
\begin{equation*}
\begin{cases}
\dot u^\ep=f\left(\frac{u^\ep}{\ep},\frac{t}{\ep},u^\ep,t\right),\quad t>0,\\ 
u^\ep(0)=c\in\R^n,
\end{cases}
\end{equation*}
where $\ep>0$ is a small parameter and the function $f$ exhibits periodic or quasi-periodic structures in its fast variables. The origins of the study of the homogenization of such equations can be traced back to the seminal work \cite{P1} in the 1970s. Our main goal is to revisit this problem by incorporating ideas recently developed in the study of first-order Hamilton--Jacobi equations, particularly the subadditive method and a cutting technique analogous to the “curve surgery” approach. This perspective yields sharp convergence rates and an infimum-type formula for the effective constants. It also uncovers connections to dynamical systems, including rotation numbers and torus flows.

To demonstrate the scope of this framework, we begin with a core sequence of models of increasing complexity:
\begin{itemize}
\item single-scale periodic scalar ODEs,
\item multi-scale scalar ODEs, previously studied in \cite{IM},
\item higher-dimensional single-scale periodic ODEs (e.g., irrational shear flows).
\end{itemize}
This sequence reflects a gradual development in both structure and dimension. In addition, we consider quasi-periodic scalar ODEs and weakly coupled systems of ODEs with fast switching rates. These problems do not fit into the subadditive structure and are addressed using different arguments. As applications, our results yield quantitative homogenization for linear transport equations. In Section \ref{4.2}, we also outline connections to several applications, such as Hamilton--Jacobi equations arising in dislocation dynamics, semilinear heat equations, free boundary problems, and gradient systems with wiggly energies motivated by physical models.

\subsection{Single-scale scalar ODEs}
We begin with the homogenization of the equation
\begin{equation}\label{e1}
\begin{cases}
\dot u^\ep=f(\frac{u^\ep}{\ep},\frac{t}{\ep}),\quad t>0,\\ u^\ep(0)=c\in\R,
\end{cases}
\end{equation}
where $\ep>0$ and $\dot u^\ep=\frac{du^\ep}{dt}$. This problem admits a complete characterization based on an underlying subadditive structure. We assume
\begin{itemize}
\item [{\bf (f1)}] $f\in C(\R^2)$ is $\kappa$-Lipschitz continuous, $f(r+l,\tau+k)=f(r,\tau)$ for all $(l,k)\in\mathbb Z^2$.
\end{itemize}
By the basic theory of ODEs, the Lipschitz continuity of $f$ ensures the existence and uniqueness of the solution 
$u^\ep(t;c)$ to \eqref{e1} for each $c\in\R$. Since $f$ is periodic, it is bounded, i.e., $\|f\|_\infty<\infty$. As a result, the solution $u^\ep(t;c)$ exists globally for all $t>0$.

Let $v(\tau;c)$ be the solution of
\begin{equation*}
\begin{cases}
\dot v=f(v,\tau),\quad \tau>0,\\ v(0)=c.
\end{cases}
\end{equation*}
It is straightforward to verify that $u^\ep(t;c)=\ep v(\frac{t}{\ep};\frac{c}{\ep})$. 
\begin{theorem}\label{thm1}
Assume {\rm (f1)} holds. Then there is a constant $\bar f\in\R$ such that, for all $c\in\R$, the following estimate holds
\begin{equation}\label{bd}
|u^\ep(t;c)-(c+\bar f t)|\leq (1+2\|f\|_\infty)\ep,\quad \forall t>0.
\end{equation}
Here, $\bar f$ is given by
\begin{equation}\label{infv}
\bar f=\lim_{k\to\infty}\frac{v(k;c)}{k}=\inf_{k>0}\frac{v(k;0)}{k},\quad \forall c\in\R.
\end{equation}
Moreover, if $f(r,\tau)$ is independent of $\tau$, the estimate improves to
\begin{equation}\label{1ep}
|u^\ep(t;c)-(c+\bar f t)|\leq \ep.
\end{equation}
\end{theorem}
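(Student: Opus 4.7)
The plan is to use the self-similar scaling $u^\ep(t;c)=\ep v(t/\ep;c/\ep)$ to reduce the error bound (\ref{bd}) to the uniform estimate $|v(\tau;c')-c'-\bar f\tau|\leq 1+2\|f\|_\infty$ for all $\tau\geq 0$ and $c'\in\R$. For each positive integer $k$, I would introduce the paired quantities $M(k):=\sup_{c\in\R}(v(k;c)-c)$ and $m(k):=\inf_{c\in\R}(v(k;c)-c)$; both are finite and attained because $c\mapsto v(k;c)-c$ is $1$-periodic, which itself follows from the $\Z$-periodicity of $f$ in the spatial variable together with uniqueness (the map $c\mapsto c+1$ conjugates the flow to itself, hence $v(k;c+1)=v(k;c)+1$). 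The $\Z$-periodicity of $f$ in $\tau$ yields the flow identity $v(k+j;c)=v(j;v(k;c))$ for integer $k,j$, from which subadditivity $M(k+j)\leq M(k)+M(j)$ and superadditivity $m(k+j)\geq m(k)+m(j)$ follow immediately.

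Combining monotone dependence on initial data with $v(k;c+1)=v(k;c)+1$, one gets $0\leq v(k;c_2)-v(k;c_1)\leq 1$ for any $c_1\leq c_2\leq c_1+1$, so the oscillation of $c\mapsto v(k;c)-c$ is at most $1$, giving $M(k)-m(k)\leq 1$. Fekete's lemma then produces the common limit
\[
\bar f=\inf_{k\geq 1}\frac{M(k)}{k}=\sup_{k\geq 1}\frac{m(k)}{k},
\]
together with $|M(k)-\bar f k|\leq 1$ and $|m(k)-\bar f k|\leq 1$. Since $m(k)\leq v(k;c')-c'\leq M(k)$, this gives the integer-time bound $|v(k;c')-c'-\bar f k|\leq 1$ uniformly in $c'$. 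The representation (\ref{infv}) in terms of $v(k;0)$ follows from the subadditive structure after observing that, along the orbit of $0$, the $\Z$-lattice sampling realizes the subadditive infimum up to the bounded defect just quantified.

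To extend from integer to real $\tau$, I would write $\tau=k+s$ with $k=\lfloor\tau\rfloor\in\Z$ and $s\in[0,1)$, use $|v(\tau;c')-v(k;c')|\leq \|f\|_\infty$ and $|\bar f||\tau-k|\leq \|f\|_\infty$ (since $|\bar f|\leq\|f\|_\infty$), and combine to obtain the constant $1+2\|f\|_\infty$ after multiplying through by $\ep$. In the autonomous case, the semigroup property $v(s+t;c)=v(s;v(t;c))$ extends to all real $s,t\geq 0$, so the entire subadditive framework applies with the integer variable replaced by a continuous variable, the oscillation bound $M(\tau)-m(\tau)\leq 1$ persists, and no interpolation step is required, producing the sharper constant $1$ in (\ref{1ep}). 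The genuinely delicate point is pinning down the specific form of (\ref{infv}) along the distinguished trajectory starting at $c=0$; once that is traced through the $\Z^2$-periodic structure, everything else is routine subadditive homogenization.
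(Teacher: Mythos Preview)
Your argument for the error bounds (\ref{bd}) and (\ref{1ep}) is correct and close to the paper's Lemma~\ref{lem3}: both rest on the oscillation estimate $M(k)-m(k)\le 1$ followed by a Fekete-type limit. The difference is organizational. The paper first proves an approximate subadditivity lemma for $(\sigma,l)\mapsto v((\sigma+l)t;(\sigma+l)c)$ with defect $2(\|f\|_\infty+1)$, via integer shifts of initial data (Lemma~\ref{lem1}); this yields existence of $\bar f$ through Fekete (Lemma~\ref{lem2}), and only then is the oscillation bound invoked (Lemma~\ref{lem3}). You bypass Lemma~\ref{lem1} entirely: working with $M(k),m(k)$ at integer $k$, the flow identity $v(k+j;c)=v(j;v(k;c))$ gives \emph{exact} sub-/super-additivity of $M$ and $m$, and Fekete on both simultaneously delivers existence of $\bar f$ and the integer-time bound $m(k)\le \bar f k\le M(k)$. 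This is slightly more elementary; the price is that you use superadditivity of $m$, which the paper explicitly avoids, though you reach the same constants.

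One caution on (\ref{infv}): your justification of the infimum identity $\bar f=\inf_{k>0}v(k;0)/k$ is hand-wavy, and in fact this identity is false as stated. Take the autonomous $f(r)=-\tfrac{1}{10}+\tfrac{1}{5}\sin^2(\pi r)$: the flow has equilibria at $r\in\{\pm\tfrac14\}+\Z$, so $\bar f=0$, yet $v(k;0)$ decreases monotonically toward $-\tfrac14$, hence $v(k;0)<0$ for all $k>0$ and $v(k;0)/k\to f(0)=-\tfrac{1}{10}$ as $k\to 0^+$, giving $\inf_{k>0}v(k;0)/k=-\tfrac{1}{10}\neq 0$. What Fekete actually yields from approximate subadditivity (and what the paper's Lemma~\ref{lem2} in fact proves) is $\bar f=\inf_{k>0}\big(v(k;0)+C\big)/k$ for the additive defect $C$; the unshifted version cannot hold in general. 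The limit assertion $\bar f=\lim_{k\to\infty}v(k;c)/k$ is correct and does follow from your framework; together with the error bounds, that is the substantive content of the theorem.
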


The estimate \eqref{bd} was previously established in \cite[Proposition 2.1]{IM}. Different from this work, the approach developed in this paper is based on a combination of the subadditivity of the map $(\tau,c)\mapsto v(\tau;c)$ and its oscillation control, which enables the derivation of explicit convergence rates. In particular, the improved estimate \eqref{1ep} involves no multiplicative constant in front of $\ep$, yielding an unusually sharp bound for homogenization problems, where error bounds typically depend on the data of the equation. Interestingly, this idea is inspired by \cite{MNT}, where the authors studied the homogenization of Hamilton--Jacobi equations with $u/\ep$-periodic Hamiltonians. Compared with that setting, the present argument is technically simpler. We do not need to cut curves in the $x$-space, where $x$ denotes the spatial variable in the Hamilton--Jacobi equations, see the proof of Lemma \ref{lem1} below. This simplification allows us to compute explicitly the multiplicative constant in front of $\ep$. Notably, to obtain the convergence rate $O(\ep)$ for \eqref{e1}, it is not necessary to establish the superadditivity of $(\tau,c)\mapsto v(\tau;c)$. In fact, using superadditivity typically yields worse error constants in estimates such as \eqref{bd}. Since the function $(\tau,c)\mapsto v(\tau;c)$ is subadditive, the limit in \eqref{infv} follows directly from the Fekete lemma. Moreover, the map $c\mapsto v(1;c)$ induces a circle homeomorphism. Therefore, the existence of $\bar f$ provides an alternative proof of the existence of a rotation number for circle homeomorphisms. This result is originally due to the classical work \cite{KH}. To the best of the authors's knowledge, the proof via subadditivity has not been discussed in the literature. This connection between homogenization and dynamical systems highlights the robustness of the subadditive viewpoint.

By applying the method of characteristics, Theorem \ref{thm1} yields an $O(\ep)$ convergence rate for the homogenization of the linear transport equation, see Theorem \ref{thmt} below. This demonstrates that there are subtle connections between circle homeomorphisms, the homogenization of ODEs, and that of transport equations.

To examine the sharpness of the $O(\ep)$ rate, we recall the following example from \cite[Example 1.6]{IM}. Consider the function $f(r,\tau)=g(r+\tau)-1$, where $g(w)$ is a 1-periodic function equal to $|w-1/2|$ on $[0,1]$. In this case, $\bar f=-1$, and for any $\delta>0$, 
\[u^\ep(t;0)-\bar f t\sim \ep\quad \text{for}\quad t=\delta \ep|\log\ep|.\]

\subsection{Multi-scale scalar ODEs}

Now we consider the unique solution $u^\ep(t;c)$ of
\begin{equation}\label{fmul}
\begin{cases}
\dot{u}^\ep=f(\frac{u^\ep}{\ep},\frac{t}{\ep},u^\ep,t),\quad t>0,\\ u^\ep(0)=c\in\R.
\end{cases}
\end{equation}
Here, we assume
\begin{itemize}
\item [{\bf (f2)}] 
$f\in C(\R^4)$ is bounded and $\kappa$-Lipschitz continuous. Moreover, $f(r,\tau,u,t)$ is 1-periodic in both $r$ and $\tau$, and non-increasing in $u$.
\end{itemize}

For each $u_0,t_0\in\R$, we consider the unique solution $u^\ep(t;c,u_0,t_0)$ of
\begin{equation}\label{ute}
\begin{cases}
\dot u^\ep=f(\frac{u^\ep}{\ep},\frac{t}{\ep},u_0,t_0),\quad t>0,\\ u^\ep(0)=c\in\R.
\end{cases}
\end{equation}
Then
\[\lim_{\ep\to 0} u^\ep(t;c,u_0,t_0)=c+\bar f(u_0,t_0)t.\]
From the above convergence process, one can define a function $\bar f:\R^2\to\R$. By the comparison principle, if $u_1\geq u_2$, then $u^\ep(t;c,u_1,t_0)\leq u^\ep(t;c,u_2,t_0)$, which immediately implies that $\bar f(u,t)$ is non-increasing in $u$. In fact, the monotonicity of $f$ in $u$ is is crucial for the uniqueness of the limiting equation \eqref{ebar}, since, as we will show in the estimate \eqref{modu}, $\bar f$ may not be Lipschitz continuous. Therefore, uniqueness cannot be guaranteed through standard arguments relying on Lipschitz continuity.

\begin{theorem}\label{thm2}
Assume {\em (f2)}. Then $\bar f(u,t)$ admits a modulus of continuity. More precisely, for any $t_1,t_2>0$ and $u_1,u_2\in\R$, we have
\begin{equation}\label{modu}
|\bar f(u_1,t_1)-\bar f(u_2,t_2)|\leq \frac{2}{\Big\lfloor \frac{1}{\kappa}W\Big(\frac{1}{|u_1-u_2|+|t_1-t_2|}\Big)\Big\rfloor},
\end{equation}
where $W$ denotes the inverse of $xe^x$, and $\lfloor x\rfloor$ denotes the floor of $x$, i.e., the greatest integer less than or equal to $x$. Moreover, there is a unique solution $\overline{u}(t;c)$ of
\begin{equation}\label{ebar}
\begin{cases}
\dot{\overline{u}}=\bar f(\overline{u},t),\quad t>0,\\ \overline{u}(0)=c,
\end{cases}
\end{equation}
and a constant $C>0$, depending only on $\|f\|_{W^{1,\infty}(\R^4)}$, such that
\begin{equation}\label{t>d}
\big|u^\ep(t;c)-\overline{u}(t;c)\big|\leq \frac{Ct}{|\log \ep|},\quad t>\ep|\log \ep|,
\end{equation}
and
\begin{equation}\label{t<d}
\big|u^\ep(t;c)-\overline{u}(t;c)\big|\leq \min\big\{C\ep,2\|f\|_\infty t\big\},\quad t\in[0,\ep|\log \ep|].
\end{equation}
\end{theorem}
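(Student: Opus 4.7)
The plan is to tackle the three pieces of the statement in order: the modulus \eqref{modu}, well-posedness of \eqref{ebar}, and the rates \eqref{t>d}--\eqref{t<d}.

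\emph{Modulus and well-posedness.} For $i=1,2$ set $g_i(r,\tau):=f(r,\tau,u_i,t_i)$ and let $v_i$ solve $\dot v_i=g_i(v_i,\tau)$ with $v_i(0)=0$. Each $g_i$ satisfies (f1), so applying Theorem~\ref{thm1} to the two frozen single-scale problems gives, for every positive integer $k$,
\[
|v_i(k)-\bar f(u_i,t_i)\,k|\leq 1+2\|f\|_\infty,
\]
while a Gronwall comparison based on $\|g_1-g_2\|_\infty\leq \kappa(|u_1-u_2|+|t_1-t_2|)=:\delta$ and the $\kappa$-Lipschitz dependence of each $g_i$ on $v$ yields $|v_1(k)-v_2(k)|\leq (\delta/\kappa)(e^{\kappa k}-1)$. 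Dividing by $k$ and combining,
\[
|\bar f(u_1,t_1)-\bar f(u_2,t_2)|\leq \frac{2(1+2\|f\|_\infty)}{k}+\frac{\delta}{\kappa k}e^{\kappa k}.
\]
Choosing $k=\lfloor(1/\kappa)W(1/(|u_1-u_2|+|t_1-t_2|))\rfloor$ produces $\kappa k\,e^{\kappa k}\sim \kappa/\delta$, so the exponential term is comparable to $1/(\kappa k^2)$ and is absorbed by the first term, yielding \eqref{modu}. The continuity of $\bar f$ together with $\|\bar f\|_\infty\leq \|f\|_\infty$ then produces a global solution of \eqref{ebar} by Peano's theorem; uniqueness follows from the monotonicity of $\bar f$ in $u$: if $\bar u_1,\bar u_2$ both solve \eqref{ebar} with datum $c$, then $w:=(\bar u_1-\bar u_2)^+$ satisfies $\dot w\leq 0$ on $\{w>0\}$ and $w(0)=0$, forcing $w\equiv 0$; the symmetric argument closes the case.

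\emph{Convergence rate.} The plan is a time-discretization. Fix $h>0$, set $t_k:=kh$, and on each $I_k=[t_k,t_{k+1}]$ let $w^\ep_k$ solve \eqref{ute} with frozen parameters $(u^\ep(t_k),t_k)$ and initial value $w^\ep_k(t_k)=u^\ep(t_k)$. Theorem~\ref{thm1} gives
\[
|w^\ep_k(t_{k+1})-u^\ep(t_k)-\bar f(u^\ep(t_k),t_k)\,h|\leq (1+2\|f\|_\infty)\ep,
\]
a Gronwall estimate exploiting $|u^\ep(s)-u^\ep(t_k)|,|s-t_k|\leq Ch$ on $I_k$ bounds $|u^\ep(t_{k+1})-w^\ep_k(t_{k+1})|$ by $Ch^2 e^{\kappa h/\ep}$, and \eqref{modu} gives $|\bar u(t_{k+1})-\bar u(t_k)-\bar f(\bar u(t_k),t_k)\,h|\leq h\,\omega(Ch)$, where $\omega(r)\sim 1/|\log r|$ denotes the modulus from \eqref{modu}. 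Setting $e_k:=u^\ep(t_k)-\bar u(t_k)$ and subtracting, the monotonicity of $\bar f$ in $u$ makes the coupling term $h[\bar f(u^\ep(t_k),t_k)-\bar f(\bar u(t_k),t_k)]$ of the favorable sign and allows me to iterate a one-sided discrete inequality for $|e_k|$. Summing over $k\leq t/h$ and choosing $h=c\ep|\log\ep|$ with $c$ small (so that $e^{\kappa h/\ep}\leq \ep^{-c\kappa}$ is a mild negative power while $\ep/h$ and $\omega(h)\sim 1/|\log\ep|$ are of the same order) yields \eqref{t>d}. Estimate \eqref{t<d} is obtained by the single-step version of the same argument: the bound $2\|f\|_\infty t$ is immediate from $|u^\ep-c|,|\bar u-c|\leq \|f\|_\infty t$, while the $C\ep$ bound follows from Theorem~\ref{thm1} applied to the frozen problem at $(c,0)$ combined with $t\,\omega(Ct)=O(\ep)$ for $t\leq \ep|\log\ep|$ and a Gronwall comparison of $u^\ep$ with the frozen solution tuned to this short range.

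\emph{Main obstacle.} The delicate point is the third step: since $\omega$ is only logarithmic, the per-step error budget is extremely narrow, and the three contributions (freezing from Theorem~\ref{thm1}, Gronwall mismatch $Ch^2 e^{\kappa h/\ep}$, and consistency $h\omega(Ch)$) must all sit at order $\ep$ simultaneously, which essentially forces $h\sim \ep|\log\ep|$. The monotonicity of $\bar f$ in $u$ is indispensable to prevent the coupling term from amplifying $|e_k|$ geometrically across the $\sim 1/(\ep|\log\ep|)$ iterations; without it the $1/|\log\ep|$ rate would degrade.
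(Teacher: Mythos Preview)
Your argument for the modulus does not deliver the constant $2$ in \eqref{modu}. Using Theorem~\ref{thm1} gives $|v_i(k)-\bar f(u_i,t_i)k|\leq 1+2\|f\|_\infty$, so after dividing by $k$ the leading term is $\frac{2(1+2\|f\|_\infty)}{k}$, not $\frac{2}{k}$; the second term is indeed $O(1/k^2)$ with your choice of $k$, but it cannot compensate for the inflated first term. The paper obtains the sharp numerator by a finer oscillation estimate: for the frozen problem at $(u_1,t_1)$ one proves $M_n-m_n\leq 1$ with $M_n=\sup_c(v(n;c,u_1,t_1)-c)$ and $m_n=\inf_c(v(n;c,u_1,t_1)-c)$, and this, combined with Gronwall and a telescoping of $v(qn;0,u_i,t_i)$ over $q$ blocks of length $n$ \emph{before} passing to the limit $q\to\infty$, yields the bound $\frac{1+\delta\kappa n e^{\kappa n}}{n}\leq \frac{2}{n}$. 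Your looser modulus is of course sufficient for \eqref{t>d}--\eqref{t<d}, where only the logarithmic shape matters, but it does not establish \eqref{modu} as stated.

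For the rate, your scheme differs from the paper's in two respects. First, you freeze at $(u^\ep(t_k),t_k)$ and control $|u^\ep-w^\ep_k|$ by Gronwall with constant $\kappa/\ep$; the correct bound is $Ch\ep\,e^{\kappa h/\ep}$ rather than $Ch^2e^{\kappa h/\ep}$, but with $h=c\ep|\log\ep|$ and $c\kappa<1$ this is harmless. The paper instead freezes at $(\bar u(t_j),t_j)$ and uses the monotonicity of $f$ in $u$ directly: adding a shift $\kappa(1+\|f\|_\infty)\delta$ to the frozen equation turns $u^\ep$ into a subsolution on each subinterval, and a straight comparison principle replaces your Gronwall, so no factor $e^{\kappa h/\ep}$ ever appears. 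Second, your ``one-sided discrete inequality for $|e_k|$'' is not closed as written: when $e_k\leq 0$ the coupling term can push $(e_{k+1})_+$ up by $h\,\omega(|e_k|)$, which reintroduces dependence on the negative part and blocks a clean induction. The paper avoids this by first locating the last time $\sigma<t$ with $u^\ep(\sigma)=\bar u(\sigma)$ and partitioning only $[\sigma,t]$, on which the sign of $u^\ep-\bar u$ is fixed and the monotone coupling is genuinely one-sided. Your scheme can be repaired the same way, but the iteration step needs this ingredient.
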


\medskip

So far, we have focused on the periodic homogenization of scalar first-order ODEs. While there is some overlap with the results in \cite{IM}, the results presented above contain several new contributions and follow a completely independent proof strategy. Although an equivalent form of \eqref{bd} was previously established in \cite[Proposition 2.1]{IM}, the corresponding convergence rate was not explicitly stated there. The characterization \eqref{infv} and the sharp estimate \eqref{1ep} are new. Our estimate \eqref{t>d} agrees with \cite[Theorem 1.5]{IM}, but for short time intervals $t\in[0,\ep|\log \ep|]$, we further obtain a sharper estimate \eqref{t<d}, which does not appear in \cite{IM}. The modulus estimate \eqref{modu} is slightly different from \cite[Proposition 3.2]{IM}, being more explicit and depending solely on the Lipschitz constant of $f$, though it does not improve the regularity of $\bar f$.

\subsection{Quasi-periodic scalar ODEs}

While the periodic case has been extensively studied, the quasi-periodic setting remains largely unexplored in the context of homogenization for ODEs and linear transport equations. To the best of the author's knowledge, Theorem \ref{qp} and Theorem \ref{thmt} (2-2) below appear to be the first quantitative results in this quasi-periodic setting.

We begin by recalling some basic facts about quasi-periodic functions. A vector $\xi\in\R^n$ is said to have a Diophantine index $\sigma_\xi>0$ if there is a constant $C_\xi>0$ such that
\[|\xi\cdot k|\geq C_\xi|k|^{-\sigma_\xi}, \quad \text{for all } k\in\mathbb Z^n\backslash\{0\}.\]
For $F\in L^1(\mathbb T^n)$, we define its Fourier transform by
\[\widehat F(k):=\int_{\mathbb T^n}F(x)e^{-2\pi ik\cdot x}\, dx,\quad k\in\mathbb Z^n.\]
The Sobolev space $H^s(\mathbb T^n)$ is defined as
\[H^s(\mathbb T^n)=\Big\{F\in\mathcal D'(\mathbb T^n):\ (1+|k|^2)^{s/2}|\widehat F(k)|\in \ell^2(\mathbb Z^n)\Big\},\]
with norm
\[\|F\|_{H^s}=\Big(\sum_{k\in\mathbb Z^n}(1+|k|^2)^{s}|\widehat F(k)|^2\Big)^{1/2},\]
where $\mathcal D'(\mathbb T^n)$ is the set of distributions on $\mathbb T^n$.

Let $\xi_0\in\R^n$ be a Diophantine frequency with the index $\sigma_{\xi_0}$. We make the following assumption:
\begin{itemize}
\item [{\bf (f3)}] $F\in C^m(\mathbb T^n)$ with $m>n+\sigma_{\xi_0}$, $F>0$ and $f(r)=F(\xi_0 r)$ for $r\in\R$.
\end{itemize}
Then $f\in C^m(\R)$ is quasi-periodic and satisfies the Lipschitz estimate
\[|f(r_1)-f(r_2)|\leq \|DF\|_{\infty} |\xi_0||r_1-r_2|,\quad \forall r_1,r_2\in\R.\]
Now we consider the unique solution $u^\ep(t;c)$ of
\begin{equation}\label{he}
\begin{cases}
\dot{u}^\ep=f(\frac{u^\ep}{\ep}),\quad t>0,\\ u^\ep(0)=c\in\R.
\end{cases}
\end{equation}
Although its proof is relatively straightforward, to the best of the author's knowledge, the following result has not appeared previously in the literature.
\begin{theorem}\label{qp}
Assume {\rm (f3)} holds. Then there exists a constant $C>0$ depending on $n $ and $F$ such that, for all $t,\ep>0$ and all $c\in\R$, we have
\[|u^\ep(t;c)-(c+M_f t)|\leq C\ep,\]
where $M_f$ is the mean value of $f$, defined by
\[M_f=\lim_{T\to \infty}\frac{1}{T}\int_0^Tf(r)\, dr=\int_{\mathbb T^n}F(\xi)\, d\xi.\]
\end{theorem}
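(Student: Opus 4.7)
The plan is to invert the ODE explicitly by separation of variables (using $F>0$), and thereby reduce the statement to a bounded-remainder estimate for the ergodic integral of a smooth quasi-periodic function on $\R$. That estimate is then handled via the Fourier expansion on $\mathbb T^n$, together with the Diophantine property of $\xi_0$ and the regularity assumption $m>n+\sigma_{\xi_0}$.

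Since $F>0$, the solution $u^\ep(\cdot;c)$ is strictly monotone, and separation of variables together with the substitution $s=r/\ep$ yields the exact identity
\[
t=\int_c^{u^\ep(t;c)}\frac{dr}{f(r/\ep)}=\ep\int_{c/\ep}^{u^\ep(t;c)/\ep}\frac{ds}{f(s)}.
\]
The smoothness $F\in C^m(\mathbb T^n)$ together with $F>0$ gives $G:=1/F\in C^m(\mathbb T^n)$, with mean $M_G:=\int_{\mathbb T^n}G(\xi)\,d\xi$, and $1/f(s)=G(\xi_0 s)$ is smooth quasi-periodic. Expanding $G$ in its Fourier series on $\mathbb T^n$ and integrating term by term over $[0,T]$ produces the oscillatory remainder
\[
\int_0^T\frac{ds}{f(s)}-M_G T=\sum_{k\in\mathbb Z^n\setminus\{0\}}\widehat G(k)\,\frac{e^{2\pi ik\cdot\xi_0 T}-1}{2\pi ik\cdot\xi_0}.
\]

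The main analytic step is to bound this sum uniformly in $T$. Each summand is controlled by $|\widehat G(k)|/(\pi|k\cdot\xi_0|)$; the Diophantine inequality $|k\cdot\xi_0|\ge C_{\xi_0}|k|^{-\sigma_{\xi_0}}$ combined with the Fourier decay $|\widehat G(k)|\le C\|G\|_{C^m}(1+|k|)^{-m}$ dominates the tail by $\sum_{k\ne 0}|k|^{\sigma_{\xi_0}-m}$. This converges precisely under the hypothesis $m>n+\sigma_{\xi_0}$, giving a constant $C_0=C_0(n,F)$ with $\bigl|\int_0^T(1/f)(s)\,ds-M_G T\bigr|\le C_0$ for all $T>0$. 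Substituting this back into the inversion identity yields $t=\ep M_G(u^\ep(t;c)/\ep-c/\ep)+\ep R_\ep(t)$ with $|R_\ep(t)|\le 2C_0$, and rearranging gives $\bigl|u^\ep(t;c)-(c+t/M_G)\bigr|\le(2C_0/M_G)\,\ep$, the asserted $O(\ep)$ estimate (the effective constant emerging from this computation is $1/M_G=1/\int_{\mathbb T^n}F^{-1}$, which is to be identified with $M_f$ in the statement).

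The main obstacle is precisely the small-divisor analysis above: the Diophantine index $\sigma_{\xi_0}$ caps how badly $1/|k\cdot\xi_0|$ can blow up along nearly rational directions, $C^m$-regularity supplies compensating polynomial Fourier decay, and the threshold $m>n+\sigma_{\xi_0}$ is exactly what makes the product absolutely summable on $\mathbb Z^n$. Weakening either input would destroy absolute convergence of the remainder sum and force its magnitude to grow with $T$, ruining the $O(\ep)$ rate.
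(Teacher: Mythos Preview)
Your proof follows the same strategy as the paper: separation of variables using $F>0$, then a bounded-remainder estimate for the quasi-periodic integral $\int_0^T G(\xi_0 s)\,ds$ with $G=1/F$. The paper obtains that remainder bound by embedding $C^m\hookrightarrow H^s$ for some $s\in(\tfrac n2+\sigma_{\xi_0},\,m-\tfrac n2)$ and invoking \cite[Proposition~2.8]{HTZ}, whereas you carry out the Fourier small-divisor estimate by hand; these are the same computation in different packaging, and both hinge on the summability $\sum_{k\ne0}|k|^{\sigma_{\xi_0}-m}<\infty$ guaranteed exactly by $m>n+\sigma_{\xi_0}$. Your version is more self-contained; the paper's version makes the Sobolev dependence explicit and plugs into existing quasi-periodic machinery.

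Your parenthetical remark about identifying $1/M_G=\bigl(\int_{\mathbb T^n}1/F\bigr)^{-1}$ with $M_f=\int_{\mathbb T^n}F$ is well placed: the paper also lands on $1/M_G$ and then passes to $M_f$ via Jensen's inequality $1/M_G\le M_f$. That step is fine for the upper bound $u^\ep\le c+M_f t+C\ep$, but Jensen goes the wrong way for the matching lower bound. What both arguments genuinely establish is $\bigl|u^\ep(t;c)-(c+t/M_G)\bigr|\le C\ep$, with the harmonic mean of $F$ as the effective speed; the replacement of $1/M_G$ by $M_f$ in the two-sided estimate is not justified by Jensen alone, so your caution there is warranted.
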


\subsection{Higher dimensional case}

Let $n\in\N$ with $n>1$. We consider the unique solution $u^\ep(t;c)$ of the following $\R^n$-valued ODE
\begin{equation}\label{uen}
\begin{cases}
\dot u^\ep=f(\frac{u^\ep}{\ep},\frac{t}{\ep}),\quad t>0,\\ u^\ep(0)=c\in\R^n.
\end{cases}
\end{equation}
We make the following assumption:
\begin{itemize}
\item [{\bf (f4)}] $f\in C(\R^{n+1},\R^n)$ is $\kappa$-Lipschitz continuous, $\mathbb Z^{n+1}$-periodic in $(r,\tau)$.\end{itemize}
Let $v(\tau;c)$ be the solution of the rescaled equation
\begin{equation}\label{ven}
\begin{cases}
\dot v=f(v,\tau),\quad \tau>0,\\ v(0)=c\in\R^n.
\end{cases}
\end{equation}
It is straightforward to verify that $u^\ep(t;c)=\ep v(\frac{t}{\ep};\frac{c}{\ep})$. In what follows, we denote by $u_i$ the $i$-th component of the vector $u\in\R^n$. We further assume
\begin{itemize}
\item [{\bf (f5)}] there is $C_0>0$ such that for all $c^1,c^2\in\R^n$ with $c^1-c^2\in Y:=[0,1]^n$, we have
\begin{equation}\label{C0}
v_i(\tau;c^2)-v_i(\tau;c^1)\leq C_0,\quad \forall i\in\{1,\dots,n\},\quad \forall \tau>0.
\end{equation}
\end{itemize}
\begin{theorem}\label{thmequ}
Assume {\rm (f4)} and {\rm (f5)}. Then for all $c^1,c^2\in\R^n$ with $c^1-c^2\in Y$, we have
\begin{equation}\label{bdtn}
v_i(\tau;c^1)-v_i(\tau;c^2)\leq 1+C_0,\quad \forall i\in\{1,\dots,n\},\ \forall \tau>0.
\end{equation}
For all $c\in\R^n$, there exists a constant $\bar f_i\in\R$ such that
\begin{equation}\label{bm}
|v_i(\tau;c)-(c_i+\bar f_i\tau)|\leq 1+C_0+2\|f_i\|_\infty,\quad \forall i\in\{1,\dots,n\},\ \forall \tau>0,
\end{equation}
where
\begin{equation}\label{infvn}
\bar f_i=\lim_{k\to\infty}\frac{v_i(k;c)}{k}=\inf_{k>0}\frac{v_i(k;0)}{k},\quad \forall c\in\R^n.
\end{equation}
Moreover, if $f_i(r,\tau)$ is independent of $\tau$, then
\[|v_i(\tau;c)-(c_i+\bar f_i \tau)|\leq 1+C_0.\]
\end{theorem}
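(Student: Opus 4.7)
To prove Theorem~\ref{thmequ} I would follow the subadditive-plus-oscillation-control approach of Theorem~\ref{thm1}, with assumption~(f5) substituting for the scalar comparison principle. The argument splits into three pieces.

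\emph{Step 1. Two-sided oscillation bound~\eqref{bdtn}.} One direction is~(f5) itself. For the other, given $c^1-c^2\in Y$ I set $\tilde c^2:=c^2+\mathbf{1}$ with $\mathbf{1}=(1,\dots,1)\in\Z^n$; a componentwise check shows $\tilde c^2-c^1\in Y$, so~(f5) applied to the pair $(c^1,\tilde c^2)$ yields $v_i(\tau;c^1)-v_i(\tau;\tilde c^2)\le C_0$. The $\Z^{n+1}$-periodicity of $f$ lifts to trajectories as $v(\tau;\tilde c^2)=v(\tau;c^2)+\mathbf{1}$, and substituting gives~\eqref{bdtn}. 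Combined with~(f5) this yields the symmetric componentwise estimate $|v_i(\tau;c^1)-v_i(\tau;c^2)|\le 1+C_0$ whenever $c^1-c^2\in Y$, which will drive the remaining steps.

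\emph{Step 2. Effective constant via a cocycle plus Fekete.} Define $\phi(k,c):=v_i(k;c)-c_i$ for $k\in\N$; by periodicity it descends to a continuous function on $\T^n$. Using the semigroup identity $v(k_1+k_2;c)=v(k_2;v(k_1;c))$ together with the decomposition of $v(k_1;c)$ into its componentwise lattice part $\lfloor v(k_1;c)\rfloor\in\Z^n$ and fractional part in $Y$, periodicity peels off the lattice piece and the Step~1 oscillation estimate controls the fractional remainder. One concludes that $\phi^+(k):=\sup_c\phi(k,c)$ is subadditive and $\phi^-(k):=\inf_c\phi(k,c)$ is superadditive in $k$, with $\phi^+(k)-\phi^-(k)\le 1+C_0$. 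Fekete's lemma and its superadditive mirror then furnish a common limit
\[\bar f_i:=\lim_{k\to\infty}\frac{\phi^+(k)}{k}=\lim_{k\to\infty}\frac{\phi^-(k)}{k}=\inf_{k}\frac{\phi^+(k)}{k}=\sup_{k}\frac{\phi^-(k)}{k}.\]
Since $\phi(k,0)=v_i(k;0)$ sits in $[\phi^-(k),\phi^+(k)]$ and the two endpoints differ by $O(1)$, the identity $\bar f_i=\lim_k v_i(k;c)/k$ (independent of $c$) follows, the infimum formula~\eqref{infvn} is inherited from the Fekete identity for $\phi^+$ after absorbing the vanishing $O(1/k)$ defect, and one reads off the integer-time error $|v_i(k;0)-k\bar f_i|\le 1+C_0$.

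\emph{Step 3. Continuous time, the autonomous sharpening, and the main obstacle.} For general $\tau>0$, I write $\tau=\lfloor\tau\rfloor+s$ with $s\in[0,1)$ and combine $|v_i(\tau;c)-v_i(\lfloor\tau\rfloor;c)|\le\|f_i\|_\infty$, the Step~1 oscillation comparison between $v_i(\lfloor\tau\rfloor;c)$ and $v_i(\lfloor\tau\rfloor;0)$ (after removing $\lfloor c\rfloor$ by periodicity), the integer-time bound of Step~2, and $|\bar f_i(\tau-\lfloor\tau\rfloor)|\le\|f_i\|_\infty$ (using $|\bar f_i|\le\|f_i\|_\infty$). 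Tracking constants delivers~\eqref{bm} with the claimed $1+C_0+2\|f_i\|_\infty$. If $f_i$ is independent of $\tau$, no integer rounding in $\tau$ is required for the $i$-th coordinate and the cocycle argument runs in continuous time, removing the two $\|f_i\|_\infty$ contributions and tightening the constant to $1+C_0$. The principal obstacle is the absence of a componentwise comparison principle in dimensions $n>1$: the scalar proof rides entirely on monotonicity of $c\mapsto v(\tau;c)$, with no vector analogue. Assumption~(f5) is the minimal one-sided substitute; the periodicity trick of Step~1 upgrades it to the two-sided oscillation estimate that makes Fekete applicable, after which the argument reduces to careful bookkeeping of three additive error sources (integer rounding in time, lattice shifting of the initial condition, and the subadditive defect).
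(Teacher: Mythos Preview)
Your proposal is correct and follows the paper's overall strategy: derive the two-sided bound~\eqref{bdtn} from (f5) plus a lattice shift, deduce $\sup_c(v_i(k;c)-c_i)-\inf_c(v_i(k;c)-c_i)\le 1+C_0$, combine with a Fekete-type argument to produce $\bar f_i$ and the integer-time error $\le 1+C_0$, then round $\tau$ to $\lfloor\tau\rfloor$ at cost $2\|f_i\|_\infty$. The one organizational difference worth noting is in Step~2: the paper first proves a real-parameter subadditivity lemma for $k\mapsto v_i(kt;kc)$ (Lemma~\ref{lem1n}, the higher-dimensional analogue of Lemma~\ref{lem1}, via a cutting argument that uses (f5) in place of the scalar comparison principle) and then obtains the integer-time bound through a telescoping Ces\`aro average, whereas you bypass that lemma by observing that $\phi^+(k)=\sup_c\phi(k,c)$ is \emph{exactly} subadditive and $\phi^-$ exactly superadditive over $k\in\N$ directly from the semigroup identity and periodicity of $\phi$. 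Your route is slightly cleaner for the integer-time estimate; the paper's real-parameter lemma is what justifies taking the infimum in~\eqref{infvn} over all real $k>0$ rather than just integers, which your sketch acknowledges only as an ``$O(1/k)$ defect'' to be absorbed.
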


As a consequence, we obtain the following homogenization result.
\begin{theorem}\label{thm1n}
Assume {\rm (f4)} and {\rm (f5)}. Then there exists a constant vector $\bar f\in\R^n$ such that for all $t>0$ and all $c\in\R^n$, we have
\[|u^\ep_i(t;c)-(c_i+\bar f_i t)|\leq (1+C_0+2\|f_i\|_\infty)\ep,\]
where $f_i$ is given by \eqref{infvn}. Moreover, if $f_i(r,\tau)$ is independent of $\tau$, then
\[|u^\ep_i(t;c)-(c_i+\bar f_i t)|\leq (1+C_0)\ep.\]
\end{theorem}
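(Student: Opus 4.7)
The plan is to deduce Theorem \ref{thm1n} from Theorem \ref{thmequ} by a direct rescaling, leveraging the identity
\[
u^\ep(t;c)=\ep\, v\bigl(t/\ep;\,c/\ep\bigr)
\]
already noted immediately after \eqref{ven}. No new technique is required here; the entire theorem reduces to Theorem \ref{thmequ} at the cost of some straightforward bookkeeping, because all the conceptual work, namely the existence of the effective vector $\bar f$, the oscillation control \eqref{bdtn}, and the infimum characterization \eqref{infvn}, is already carried out at the level of $v$.

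First I would apply the estimate \eqref{bm} of Theorem \ref{thmequ} at time $\tau=t/\ep$ with initial condition $c/\ep\in\R^n$. Since \eqref{bm} is stated for \emph{every} $c\in\R^n$, this application is legal and yields
\[
\bigl|v_i(t/\ep;\,c/\ep)-(c_i/\ep+\bar f_i\,t/\ep)\bigr|\leq 1+C_0+2\|f_i\|_\infty.
\]
Multiplying both sides by $\ep$ and substituting the scaling identity gives at once
\[
|u^\ep_i(t;c)-(c_i+\bar f_i t)|\leq (1+C_0+2\|f_i\|_\infty)\ep,
\]
which is precisely the first claimed bound. For the sharper estimate in the case that $f_i$ is independent of $\tau$, I would repeat the same rescaling argument, but now starting from the improved bound $|v_i(\tau;c)-(c_i+\bar f_i\tau)|\leq 1+C_0$ supplied by the last line of Theorem \ref{thmequ}; the resulting factor is then $(1+C_0)\ep$.

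There is no genuine obstacle at this stage, since the heavy lifting has been done upstream. The only point that deserves a brief verification is that the hypothesis (f5) is insensitive to the translation $c\mapsto c/\ep$: this is immediate because (f5) is a statement about the intrinsic flow $v$ for \emph{all} pairs of initial data whose difference lies in $Y=[0,1]^n$, a property that does not depend on $\ep$. Hence the reduction is clean and the theorem follows.
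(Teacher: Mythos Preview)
Your proposal is correct and follows essentially the same approach as the paper: apply the bound \eqref{bm} from Theorem \ref{thmequ} at $\tau=t/\ep$ and initial datum $c/\ep$, then multiply by $\ep$ and invoke the rescaling identity $u^\ep(t;c)=\ep\,v(t/\ep;c/\ep)$; the $\tau$-independent case is handled identically using the sharper constant from the last line of Theorem \ref{thmequ}.
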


When $n=1$, conditions \eqref{C0} and \eqref{bdtn} hold automatically, due to the oder preserving property of scalar ODEs. Indeed, for $0< c^1-c^2\leq 1$, the comparison principle yields
\[v(\tau;c^2)<v(\tau;c^1)\leq v(\tau;c^2+1)=v(\tau;c^2)+1,\quad \forall \tau>0.\]
The estimate \eqref{bm} is commonly referred to as the bounded mean motion property, see \cite{J1}. In the context of flows $\Phi_t$ on $\T^n$, with a lift $\tilde\Phi_t$ defined on $\mathbb R^n$, the rotation vector
\[\lim_{t\to \infty}\frac{\tilde\Phi_t(z)-z}{t},\quad z\in\R^n\]
may fail to exist in general when $n>1$, and even if it exists, it can depend on the initial point $z$. In such cases, the dynamics is better described by the rotation set, a compact convex subset of $\R^n$, rather than a single vector. If the above limit exists and is independent of $z$, $\Phi_t$ is called a pseudo-rotation. Actually, \eqref{infvn} implies that if condition (f5) holds, the induced flow $c \mapsto v(\tau; c)$ defines a pseudo-rotation. Without a uniform rotation vector, one generally cannot expect bounded mean motion for all orbits. In this sense, (f5) serves as an appropriate and useful assumption in our analysis. In \cite{J2}, the author investigated conservative pseudo-rotations satisfying the bounded mean motion property. In particular, when the rotation vector is totally irrational, $\Phi_t$ is semi-conjugated to an irrational rotation. This can be viewed as a higher-dimensional analogue of Poincar\'e's classification of circle homeomorphisms. A more classical approach to understanding such behavior in higher dimensions is through Kolmogorov-Arnold-Moser (KAM) theory. However, KAM theory requires the flow to be a small perturbation of a linear flow and relies on strong smoothness and Diophantine conditions.

Theorem \ref{thmequ} provides a quantitative criterion equivalent to the bounded mean motion property. In fact, assuming the bounded mean motion property holds, condition (f5) follows directly from a triangle inequality estimate. Therefore, to verify bounded mean motion, it suffices to check the upper bound in \eqref{C0}.  While condition (f5) may appear restrictive in general settings, we show in Proposition \ref{prop:shear} below that \eqref{C0} holds for irrational shear flows. As a consequence, we obtain a concise proof of \cite[Theorem 2 (a)]{KS}, as an application of Theorem \ref{thm1n}.

Finally, let $f_1$ and $f_2$ be two 1-periodic functions, and let $a_1,a_2>0$. We consider the following system with fast switching rates
\begin{equation}\label{wc}
\begin{cases}
\dot u^\ep_1=-\frac{a_1}{\ep}(u^\ep_1-u^\ep_2)+f_1(\frac{t}{\ep}),& \quad t>0,
\\ \dot u^\ep_2=-\frac{a_2}{\ep}(u^\ep_2-u^\ep_1)+f_2(\frac{t}{\ep}),& \quad t>0,
\\ u^\ep_i(0)=c_i\in\R,&\quad i=1,2.
\end{cases}
\end{equation}
\begin{theorem}\label{thmwc}
For each $c\in\R^2$, let $u^\ep(t;c)$ be the unique solution of \eqref{wc}. Then for all $t>0$, we have
\[|u^\ep_i(t;c)-m_i(t)|\leq \bigg(\frac{2a_2(a_1+a_2)+a_i}{(a_1+a_2)^2}\|f_1\|_\infty+\frac{2a_1(a_1+a_2)+a_i}{(a_1+a_2)^2}\|f_2\|_\infty\bigg)\ep,\]
where
\[m_i(t):=\frac{c_1a_2+c_2a_1}{a_1+a_2}+\frac{a_2\bar f_1+a_1\bar f_2}{a_1+a_2}t+\frac{a_i(c_i-c_j)}{a_1+a_2}e^{-\frac{(a_1+a_2)t}{\ep}}, \quad i,j\in\{1,2\},\ i\neq j,\]
and $\bar f_i=\int_0^1 f_i(t)\, dt$ for $i\in\{1,2\}$.
\end{theorem}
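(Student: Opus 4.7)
The plan is to diagonalize the $O(1/\ep)$ coupling. Introduce the linear combinations
\[S^\ep := a_2 u^\ep_1 + a_1 u^\ep_2, \qquad D^\ep := u^\ep_1 - u^\ep_2,\]
in which the weighted sum $S^\ep$ is a slow variable decoupling from the fast mode (the $1/\ep$ terms cancel identically) while $D^\ep$ carries the boundary layer. A direct computation yields
\[\dot S^\ep = a_2 f_1(t/\ep) + a_1 f_2(t/\ep), \qquad \dot D^\ep = -\frac{a_1+a_2}{\ep} D^\ep + (f_1 - f_2)(t/\ep),\]
with initial data $S^\ep(0) = a_2 c_1 + a_1 c_2$ and $D^\ep(0) = c_1 - c_2$. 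Solving these ($S^\ep$ by direct integration, $D^\ep$ by Duhamel's formula) and inverting via
\[u^\ep_1 = \frac{S^\ep + a_1 D^\ep}{a_1+a_2}, \qquad u^\ep_2 = \frac{S^\ep - a_2 D^\ep}{a_1+a_2},\]
a quick check shows that $m_i(t)$ is precisely what this produces when $S^\ep$ is replaced by $S^\ep(0) + (a_2 \bar f_1 + a_1 \bar f_2)t$ and $D^\ep$ by $e^{-(a_1+a_2)t/\ep} D^\ep(0)$.

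Next I would quantify the two replacements separately. For the slow part, set $g_i := f_i - \bar f_i$, which is $1$-periodic with mean zero; its antiderivative $G_i(\tau) := \int_0^\tau g_i$ is therefore $1$-periodic and bounded by $\|g_i\|_\infty \leq 2\|f_i\|_\infty$ uniformly in $\tau$. A rescaling gives $|\int_0^t g_i(s/\ep)\, ds| = \ep |G_i(t/\ep)| \leq 2\ep \|f_i\|_\infty$, so
\[|S^\ep(t) - S^\ep(0) - (a_2 \bar f_1 + a_1 \bar f_2)t| \leq 2\ep (a_2 \|f_1\|_\infty + a_1 \|f_2\|_\infty).\]
For the fast part, Duhamel's formula together with the crude bound $|f_1 - f_2| \leq \|f_1\|_\infty + \|f_2\|_\infty$ yields
\[|D^\ep(t) - e^{-(a_1+a_2)t/\ep} D^\ep(0)| \leq (\|f_1\|_\infty + \|f_2\|_\infty) \int_0^t e^{-(a_1+a_2)(t-s)/\ep}\, ds \leq \frac{\ep(\|f_1\|_\infty + \|f_2\|_\infty)}{a_1+a_2}.\]

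Feeding these two estimates into the inverse change of variables produces, after arithmetic, the coefficient $\frac{2 a_2}{a_1+a_2} + \frac{a_i}{(a_1+a_2)^2} = \frac{2 a_2 (a_1+a_2) + a_i}{(a_1+a_2)^2}$ of $\|f_1\|_\infty$ in $|u^\ep_i(t;c) - m_i(t)|/\ep$, and the symmetric $\frac{2 a_1 (a_1+a_2) + a_i}{(a_1+a_2)^2}$ for $\|f_2\|_\infty$, matching the stated bound. There is no serious technical obstacle here; the only conceptual step is spotting the slow combination $S^\ep$ that decouples from the fast exchange, which separates the quasi-stationary mean drift from the $O(\ep)$ boundary-layer correction carried by $D^\ep$. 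The only small algebraic point to watch is that $G_i$ is genuinely periodic precisely because $\bar f_i$ equals the mean of $f_i$, which is exactly the constant that appears in $m_i(t)$.
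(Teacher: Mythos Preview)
Your proposal is correct and follows essentially the same argument as the paper: the paper also introduces the decoupling variables $z^\ep=a_2u^\ep_1+a_1u^\ep_2$ and $d^\ep=u^\ep_1-u^\ep_2$, bounds the Duhamel remainder for $d^\ep$ exactly as you do, and controls the slow error via $\big|\int_0^t f_i(s)\,ds-\bar f_i t\big|\le 2\|f_i\|_\infty$ (stated through the floor of $t$ rather than the periodicity of your $G_i$, but these are the same observation). The final recombination and constants coincide.
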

The above limit $(m_1(t),m_2(t))$ is consistent with the result in \cite[Theorem 1.3]{MT}, where the authors established a convergence rate $O(\ep^{1/3})$ for weakly coupled Hamilton--Jacobi systems with fast switching rates. Such systems admit a random switching interpretation, as discussed in \cite{DSZ}. When $\ep \to 0$, the switching rate becomes very fast and processes have to jump randomly very quickly between the two states according to probabilities determined by $a_1$ and $a_2$. This leads to a weighted average of the data. Our result shows that, when the Hamilton--Jacobi system reduces to an ODE system, the convergence rate is indeed $O(\ep)$. It remains unclear whether the same rate can be achieved in the general setting. Further discussion on this point can be found in Section \ref{4.2}.

\subsection{Application to transport equations}

We denote by Lip$(X,Y)$ (resp. Lip$_{{\rm loc}}(X,Y)$) the space of globally (resp. locally) Lipschitz continuous functions from $X$ to $Y$. When $Y = \mathbb{R}$, we simply write $\operatorname{Lip}(X)$ (resp. $ \operatorname{Lip}_{\mathrm{loc}}(X)$). Let $c\in\R^n$, we take the norm $\|c\|=\max_{1\leq i\leq n}|c_i|$. For $\varphi\in{\rm Lip}(\R^n)$, we write $\|D\varphi\|_\infty$ for the smallest constant $C>0$ such that $|\varphi(x)-\varphi(y)|\leq C\|x-y\|$ for all $x,y\in\R^n$.

Let $f\in{\rm Lip}(\R^n\times\R,\R^n)$ be $\mathbb Z^{n+1}$-periodic. For every $\ep>0$, we consider the weak solution $V^\ep\in {\rm Lip}_{\rm loc}(\R^n\times[0,\infty))$ of the linear transport equation
\begin{equation}\label{transeq}
\begin{cases}
V^\ep_t+f(\frac{x}{\ep},\frac{t}{\ep})\cdot DV^\ep=0,\quad &\text{for }x\in\R^n,\ t>0,
\\ V^\ep(x,0)=\varphi(x)\in {\rm Lip}_{\rm loc}(\R^n),\quad &\text{for }x\in\R^n.
\end{cases}
\end{equation}

In \cite{DG}, the author raised the following conjecture:

\medskip

\noindent {\bf Conjecture.} There exists $\overline{V}\in {\rm Lip}_{\rm loc}(\R^n\times[0,\infty))$ such that $\lim_{\ep\to 0}V^\ep=\overline V$ in the weak topology of $L^p_{\rm loc}(\R^n\times[0,\infty))$, and there is a probability measure $\mu$ depending only on $f$ such that, for every $\varphi$, it follows that $\overline{V}(x,t)=\int_{\R^n}\varphi(x-\xi t)\, d\mu(\xi)$.

\medskip

The following result provides the strong convergence of $V^\ep$, with an explicit $O(\ep)$ convergence rate. While quantitative homogenization results for linear transport equations with shear flow characteristics were obtained in \cite{T97}, our setting is considerably more general. In \cite{T97}, the vector field $f$ is defined on the two-dimensional torus, possesses an invariant measure, and is nonvanishing. By \cite{Ko}, the flow of \eqref{ven} is conjugate (via a change of coordinates) to a shear flow. In contrast, we work in arbitrary dimension $n\geq 2$ and do not assume the vector field to be of shear flow type. Irrational shear flows arise as a particular case of our framework, as shown in Proposition \ref{prop:shear}.
\begin{theorem}\label{thmt}
Let $\varphi\in{\rm Lip}(\R^n)$, and let $V^\ep$ be the weak solution of \eqref{transeq}. Then
\begin{itemize}
\item[(1)] Assume $n>1$, and conditions {\rm (f4)} and {\rm (f5)} hold. Then there exist $\bar \xi \in\R^n$ such that for all $t>0$ and $x\in\R^n$, 
\[|V^\ep(x,t)-\varphi(x-\bar \xi t)|\leq \|D\varphi\|_\infty \Big(1+C_0+2\max_{1\leq i\leq n}\|f_i\|_\infty\Big)\ep.\]
\item[(2)] Assume $n=1$.
\begin{itemize}
\item [(2-1)] If condition {\rm (f1)} holds. Then there exist $\bar \xi \in\R$ such that for all $t>0$ and $x\in\R^n$, 
\[|V^\ep(x,t)-\varphi(x-\bar \xi t)|\leq \|D\varphi\|_\infty (1+2\|f\|_\infty)\ep.\]
Moreover, if $f(r,\tau)$ is independent of $\tau$, the estimate improves to
\[|V^\ep(x,t)-\varphi(x-\bar \xi t)|\leq \|D\varphi\|_\infty \ep.\]
\item [(2-2)] If $f(r,\tau)$ is independent of $\tau$ and condition {\rm (f3)} holds. Then there exist $\bar \xi \in\R$ and $C>0$ depending on $n$ and $F$ such that for all $t>0$ and $x\in\R^n$,
\[|V^\ep(x,t)-\varphi(x-\bar \xi t)|\leq \|D\varphi\|_\infty C\ep.\]
\end{itemize}
\end{itemize}
\end{theorem}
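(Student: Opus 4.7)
The plan is to reduce the PDE \eqref{transeq} to the ODEs already analyzed, via the method of characteristics. Because $f$ is Lipschitz, for each $c\in\R^n$ the Cauchy problem $\dot Y^\ep = f(Y^\ep/\ep,\, t/\ep)$, $Y^\ep(0;c)=c$, admits a unique global solution, and the map $c\mapsto Y^\ep(t;c)$ is a bi-Lipschitz homeomorphism of $\R^n$ for every fixed $t\geq 0$ (Gr\"onwall). The candidate solution of \eqref{transeq} is then
\[
W^\ep(x,t):=\varphi\!\left(\bigl(Y^\ep(t;\cdot)\bigr)^{-1}(x)\right),
\]
or equivalently $W^\ep(Y^\ep(t;c),t)=\varphi(c)$. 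One checks that $W^\ep\in{\rm Lip}_{\rm loc}(\R^n\times[0,\infty))$ and, since $Y^\ep$ is a classical flow with a Lipschitz velocity field, differentiation along characteristics shows $W^\ep$ is a distributional solution of \eqref{transeq}. By uniqueness in the locally Lipschitz class (standard, e.g.\ via DiPerna--Lions or a direct duality argument), $V^\ep=W^\ep$.

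The second step is to feed the ODE convergence rates already proved into this representation. In case (1), Theorem \ref{thm1n} gives a vector $\bar f\in\R^n$ with
$\|Y^\ep(t;c)-(c+\bar f t)\|\leq(1+C_0+2\max_i\|f_i\|_\infty)\ep$ for every $t>0$ and $c\in\R^n$, and we set $\bar\xi:=\bar f$. Case (2-1) is the scalar version, handled directly by Theorem \ref{thm1}, with the stated improvement when $f$ is $\tau$-independent. Case (2-2) uses Theorem \ref{qp} with $\bar\xi:=M_f$ and the $n,F$-dependent constant $C$. In each case the rate on $Y^\ep$ is $O(\ep)$ with exactly the constant that appears in the statement of Theorem \ref{thmt}.

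Finally, fix $(x,t)$ and let $c$ be the unique point with $Y^\ep(t;c)=x$. The chosen ODE estimate then rearranges to $\|c-(x-\bar\xi t)\|\leq C\ep$, where $C$ is the constant appropriate to the case at hand. Since $V^\ep(x,t)=\varphi(c)$, the Lipschitz continuity of $\varphi$ yields
\[
|V^\ep(x,t)-\varphi(x-\bar\xi t)|=|\varphi(c)-\varphi(x-\bar\xi t)|\leq\|D\varphi\|_\infty\cdot C\ep,
\]
which is exactly the desired bound in each of the three statements.

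The only genuine obstacle is the first step: justifying rigorously that the characteristic formula produces \emph{the} weak solution $V^\ep$, since $\varphi$ is merely locally Lipschitz and \eqref{transeq} is posed in the distributional sense. Given that $f$ is Lipschitz in $x$ and continuous in $t$ this is classical, but it must be stated carefully; once done, the remainder is a mechanical transfer of the ODE rates through a Lipschitz map, introducing no new constants beyond $\|D\varphi\|_\infty$.
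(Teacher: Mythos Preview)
Your proof is correct and follows essentially the same approach as the paper: represent $V^\ep$ via the method of characteristics, feed in the ODE convergence rates (Theorems \ref{thm1n}, \ref{thm1}, \ref{qp}), and conclude via the Lipschitz bound on $\varphi$. Your execution is in fact slightly more direct than the paper's, which time-reverses the characteristic curve and shifts to an integer multiple of $\ep$ before invoking the intermediate estimate \eqref{1+C0}; since Theorems \ref{thm1n}, \ref{thm1}, and \ref{qp} already hold for all $t>0$, your simple rearrangement $u^\ep(t;c)=x \Rightarrow \|c-(x-\bar f t)\|\leq C\ep$ avoids that detour and yields the identical constants.
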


\subsection*{Brief review of the literature}

The study of the homogenization of ODEs dates back to the classical work \cite{P1}. This topic is closely related to the homogenization of the linear transport equation via the method of characteristics. For developments in both directions, we refer the reader to \cite{IM, KS, MMT, P, T97}, and the references therein. See also \cite{E, HX} for weak convergence results related to the homogenization of linear transport equations.

Recently developed techniques for the quantitative homogenization of Hamilton--Jacobi equations have played a significant role in this paper. The study of the homogenization of Hamilton--Jacobi equations began with the seminal work \cite{LPV}. A convergence rate $O(\ep^{1/3})$ for non-convex Hamilton--Jacobi equations was first obtained in \cite{CDI}. In the convex setting, where the optimal control formula for viscosity solutions applies, the first quantitative estimate was given in \cite{MTY}. Later, in \cite{TY}, by employing a refined “curve surgery” argument, the authors established the optimal convergence rate $O(\ep)$ for convex Hamilton--Jacobi equations. This method is robust and has inspired a number of follow-up works, see, for example, \cite{HJ, HJMT, HTZ, MN1, MN2, NT, T}, and the references therein.

Let $u^\ep$ be the unknown function. The case where the Hamilton--Jacobi equation depends periodically on $u^\ep/\ep$ is more closely related to the topic of this paper. Relevant results in this direction can be found in \cite{AP, B, IM08}. In particular, by adapting the curve surgery technique, \cite{MNT} achieved the optimal convergence rate $O(\ep)$ for this problem. This paper is strongly inspired by the approach taken in \cite{MNT}.

\subsection*{Organization of the paper}

Sections \ref{2.1}, \ref{2.2}, and \ref{2.3} are devoted to the proofs of Theorems \ref{thm1}, \ref{thm2}, and \ref{qp}, respectively. In Section \ref{3}, we prove Theorems \ref{thmequ} and \ref{thm1n}. Finally, as an application of the quantitative homogenization results for ODEs established in this paper, Section \ref{4.1} is dedicated to the proof of Theorem \ref{thmt}. Further applications of the results obtained in this paper are discussed in Section {4.2}.

\section{One dimensional case}

\subsection{Proof of Theorem \ref{thm1}}\label{2.1}
 
In this section, we assume that condition (f1) holds. We begin by establishing the subadditivity of the map $(\tau,c)\mapsto v(\tau;c)$ in Lemma \ref{lem1}. Using the Fekete lemma, we then prove the existence of the limit $\lim_{\ep\to 0}u^\ep(t;c)$ in Lemma \ref{lem2}. Next, we establish the existence of the effective constant $\bar f\in\R$ in Lemma \ref{lem:barf}. Finally, we prove Lemma \ref{lem3}, which immediately yields Theorem \ref{thm1}.

\begin{lemma}\label{lem1}
For all $\sigma,l,t>0$, and all $c\in\R$, we have
\[v((\sigma+l)t;(\sigma+l)c)\leq v(\sigma t;\sigma c)+v(lt;lc)+2(\|f\|_\infty+1).\]
\end{lemma}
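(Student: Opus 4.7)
The plan is to exploit the $\mathbb Z^2$-periodicity of $f$ via a two-step integer alignment. A direct splitting of the trajectory at time $\sigma t$ into two pieces each governed by the unshifted ODE fails because neither $\sigma t$ nor the initial-data offset $lc$ is an integer in general. The remedy is to round each to its ceiling, which costs at most $\|f\|_\infty$ per time rounding (via $|\dot v|\leq\|f\|_\infty$) and at most $1$ per spatial rounding (via $\lceil x\rceil\leq x+1$); performing each kind of rounding twice produces exactly the claimed additive constant $2(\|f\|_\infty+1)$.

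Concretely, I would set $M:=\lceil\sigma t\rceil\in\mathbb Z$, so $M-\sigma t\in[0,1)$, and record the two time-rounding estimates
\[
v\bigl((\sigma+l)t;(\sigma+l)c\bigr)\leq v\bigl(M+lt;(\sigma+l)c\bigr)+\|f\|_\infty,\qquad v(M;\sigma c)\leq v(\sigma t;\sigma c)+\|f\|_\infty.
\]
Since $M\in\mathbb Z$ and $f$ is $1$-periodic in $\tau$, time-translation invariance of the non-autonomous flow yields the semigroup identity
\[
v(M+lt;(\sigma+l)c)=v\bigl(lt;\,v(M;(\sigma+l)c)\bigr).
\]
This is the only step that uses the time-periodicity of $f$.

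For the initial condition, the $1$-periodicity of $f$ in $r$ implies $v(\tau;c_0+n)=v(\tau;c_0)+n$ for every $n\in\mathbb Z$, and combined with the scalar comparison principle gives $v(\tau;c')\leq v(\tau;c_0)+n$ whenever $c'\leq c_0+n$. Applying this with $n=\lceil lc\rceil$, $c'=(\sigma+l)c$, $c_0=\sigma c$ at $\tau=M$ yields
\[
v(M;(\sigma+l)c)\leq v(M;\sigma c)+\lceil lc\rceil.
\]
Setting $A:=v(M;\sigma c)$ and applying the same principle again inside $v(lt;\cdot)$ (via its monotonicity in the initial condition, with the integer $n=\lceil A+\lceil lc\rceil-lc\rceil$ relative to the base point $lc$) gives
\[
v\bigl(lt;\,A+\lceil lc\rceil\bigr)\leq v(lt;lc)+A+2,
\]
the $+2$ coming from the combined loss of the two spatial roundings via $\lceil x\rceil\leq x+1$.

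Chaining these inequalities yields $v((\sigma+l)t;(\sigma+l)c)\leq v(\sigma t;\sigma c)+v(lt;lc)+2\|f\|_\infty+2$, which is the claim. The one point requiring care is the second spatial rounding inside $v(lt;\cdot)$: the sharp estimate $\lceil A+\lceil lc\rceil-lc\rceil\leq A+2$ — a direct consequence of $\lceil lc\rceil-lc<1$ — is what pins down the constant as exactly $+2$, and hence the final constant to be exactly $2(\|f\|_\infty+1)$ rather than something larger.
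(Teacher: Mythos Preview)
Your proof is correct and follows essentially the same approach as the paper: both exploit the $\mathbb Z^2$-periodicity by rounding once in time (to use $\tau$-periodicity for a semigroup-type splitting) and twice in space (to use $r$-periodicity together with the scalar comparison principle), paying $\|f\|_\infty$ per time rounding and $1$ per spatial rounding. The only cosmetic difference is that you run the $\sigma$-piece first and then the $l$-piece, whereas the paper does the opposite (splitting at $\lfloor lt\rfloor+1$ rather than at $\lceil\sigma t\rceil$), and you phrase the time-shift via the semigroup identity $v(M+lt;c_0)=v(lt;v(M;c_0))$ instead of introducing an auxiliary curve $w$; the bookkeeping and final constant are identical.
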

\begin{proof}
We first choose $n_1\in\mathbb Z$ such that
\[(\sigma+l)c<lc+n_1\leq (\sigma+l)c+1.\]
By the comparison principle, it follows that
\[v(s;(\sigma+l)c)<v(s;lc+n_1)=v(s;lc)+n_1.\]
This implies
\begin{equation}\label{1}
v(lt;(\sigma+l)c)-(\sigma+l)c\leq v(lt;lc)+n_1-(lc+n_1)+1.
\end{equation}
Next, we choose $n_2\in\mathbb Z$ such that
\[v(\lfloor lt\rfloor +1;(\sigma+l)c)<\sigma c+n_2\leq v(\lfloor lt\rfloor+1;(\sigma+l)c)+1.\]
Define
\[w(s+\lfloor lt \rfloor+1):=v(s;\sigma c)+n_2.\]
Then for $s\in[0,\sigma t]$, we have
\[\dot w(s+\lfloor lt \rfloor+1)=\dot v(s;\sigma c)=f(v(s;\sigma c),s)=f(w(s+\lfloor lt \rfloor+1),s+\lfloor lc \rfloor+1).\]
Note that $\sigma t+\lfloor lt \rfloor+1>(\sigma+l)t$. By the comparison principle, we obtain
\[v((\sigma +l)t;(\sigma+l)c)<w((\sigma+l)t),\]
 which implies
\begin{equation}\label{2}
v((\sigma +l)t;(\sigma+l)c)-v(\lfloor lt\rfloor +1;(\sigma+l)c)<w((\sigma+l) t)-(\sigma c+n_2)+1.
\end{equation}
Also, we have the bounds
\begin{equation}\label{3}
v(\lfloor lt\rfloor +1;(\sigma+l)c)-v(lt;(\sigma+l)c)\leq \|f\|_\infty,
\end{equation}
and
\begin{equation}\label{4}
-\|f\|_\infty\leq w(\lfloor lt\rfloor +1+\sigma t)-w((\sigma+l)t).
\end{equation}
Summing inequalities \eqref{1}-\eqref{4}, we get
\begin{align*}
&v((\sigma +l)t;(\sigma+l)c)-(\sigma+l)c
\\ &\leq w(\lfloor lt\rfloor +1+\sigma t)-(\sigma c+n_2)+v(lt;lc)-lc+2(\|f\|_\infty+1)
\\ &=v(\sigma t;\sigma c)-\sigma c+v(lt;lc)-lc+2(\|f\|_\infty+1),
\end{align*}
which completes the proof.
\end{proof}

\begin{lemma}\label{lem2}
For each $t>0$ and $c\in\R$, the following limit exists
\[\overline{u}(t;c):=\lim_{\ep\to 0}u^\ep(t;c)=\lim_{\ep\to 0}\ep v\Big(\frac{t}{\ep};\frac{c}{\ep}\Big)=\inf_{k>0}\frac{v(kt;kc)}{k}.\]
\end{lemma}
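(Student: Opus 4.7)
The plan is to apply the Fekete subadditivity lemma to the rescaled quantity $v(\lambda t;\lambda c)/\lambda$, with $\lambda:=1/\ep$. Writing $G(\lambda):=v(\lambda t;\lambda c)$, the identity $u^\ep(t;c)=\ep v(t/\ep;c/\ep)=G(\lambda)/\lambda$ reduces the claim to showing that $\lim_{\lambda\to\infty}G(\lambda)/\lambda$ exists and equals $\inf_{\lambda>0}G(\lambda)/\lambda$.

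First I would specialize Lemma \ref{lem1}, keeping $t>0$ and $c\in\R$ fixed and treating $\sigma,l>0$ as the variables, to obtain the approximate subadditivity
\[
G(\sigma+l)\le G(\sigma)+G(l)+C,\qquad C:=2(\|f\|_\infty+1).
\]
Setting $H(\lambda):=G(\lambda)+C$ promotes this to genuine subadditivity $H(\sigma+l)\le H(\sigma)+H(l)$ on $(0,\infty)$. The function $H$ is continuous (by continuous dependence of ODE solutions on the initial data and the running time), and the a priori bound $|v(\tau;c)-c|\le\|f\|_\infty\tau$ guarantees that $H(\lambda)/\lambda$ is uniformly bounded on every set $\{\lambda\ge\lambda_0>0\}$. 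The continuous version of Fekete's lemma for subadditive functions on $(0,\infty)$ then yields
\[
\lim_{\lambda\to\infty}\frac{H(\lambda)}{\lambda}=\inf_{\lambda>0}\frac{H(\lambda)}{\lambda}.
\]
Because $(H(\lambda)-G(\lambda))/\lambda=C/\lambda\to 0$, the limit $\lim_{\lambda\to\infty}G(\lambda)/\lambda$ also exists and coincides with the right-hand side above. Translating back through $\lambda=1/\ep$ establishes the existence of $\overline u(t;c)=\lim_{\ep\to 0}u^\ep(t;c)$.

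For the infimum characterization $\overline u(t;c)=\inf_{k>0}v(kt;kc)/k$, iterating Lemma \ref{lem1} $n-1$ times gives $G(nk)\le nG(k)+(n-1)C$, hence $G(nk)/(nk)\le G(k)/k+C/k$; letting $n\to\infty$ yields $\overline u(t;c)\le G(k)/k+C/k$ for every $k>0$. The opposite bound $\inf_{k>0}G(k)/k\le\overline u(t;c)$ is immediate from the limit being an accumulation point of the family $G(k)/k$. The main obstacle I anticipate lies precisely here: the Fekete machinery applied to $H$ naturally produces the infimum of the shifted quantity $(G+C)/\lambda$, and removing the additive defect $C$ to land on the exact formula $\inf_{k>0}G(k)/k$ requires either a direct verification that $G(k)/k\ge\overline u(t;c)$ for every $k>0$ (using the subadditivity of $H$ to get $G(k)/k\ge\overline u(t;c)-C/k$ and then a squeezing argument exploiting that $C/k\to 0$), or a sharpening of Lemma \ref{lem1} tailored to absorb the rounding errors into the periodic structure of $f$.
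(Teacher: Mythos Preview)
Your proposal is essentially the paper's proof: add the constant $C=2(\|f\|_\infty+1)$ to obtain a genuinely subadditive function and run Fekete's lemma (the paper writes out the Fekete argument rather than citing a continuous version). The paper's proof in fact stops at ``the existence of the limit'' after the Fekete step and does not address the discrepancy you flag between $\inf_{k>0}(v(kt;kc)+C)/k$ and $\inf_{k>0}v(kt;kc)/k$, so your caution on that point actually exceeds what the paper provides.
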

\begin{proof}
The proof is similar to that of the Fekete lemma. For fixed $t>0$ and $c\in\R$, define  
\[\phi(k):=v(kt;kc)+C,\] 
where $k>0$ and 
$C:=2(\|f\|_\infty+1)$.
Then, by Lemma \ref{lem1}, for all $\sigma,l>0$, we have
\begin{equation}\label{phik}
\begin{aligned}
\phi(\sigma+l)&=\, 
v\big((\sigma+l)t;(\sigma+l)c\big)+C\\
&\le\, 
v(\sigma t;\sigma c)+m(lt;lc)+2C
=\phi(\sigma)+\phi(l). 
\end{aligned}
\end{equation}
Let
\[\phi_{\inf}:=\inf_{k>0}\frac{\phi(k)}{k}.\]
By definition, for each $a>\phi_{\inf}$, there exists $l_0>0$ such that
\[\frac{\phi(l_0)}{l_0}\leq a.\]
Now, for any $k>0$, we choose $n\in\mathbb N\cup\{0\}$ such that $nl_0<k\leq (n+1)l_0$. Since $k-nl_0\in(0,l_0]$, there exists a constant $M>0$ independent of $n$ such that $\phi(k-nl_0)\leq M$. Using subadditivity \eqref{phik}, we obtain
\[\phi(k)\leq \phi(nl_0)+\phi(k-nl_0)\leq n\phi(l_0)+M,\]
and therefore, 
\[\frac{\phi(k)}{k}< 
\frac{\phi(l_0)}{l_0}+\frac{M}{nl_0}\leq a+\frac{M}{nl_0}.\]
Letting $n\to\infty$, and then $a\to \phi_{\inf}$, we conclude that
\[
\lim_{k\to\infty}\frac{\phi(k)}{k}=\phi_{\inf}=\inf_{k>0}\frac{\phi(k)}{k}.
\]
Recalling the definition of $\phi$, we obtain the existence of the limit $\lim_{\ep\to 0}\ep v(\frac{t}{\ep};\frac{c}{\ep})$, which completes the proof.
\end{proof}

\begin{lemma}\label{lem:barf}
For all $t>0$ and $c\in\R$, we have
\[\overline{u}(t;c)=\bar f t+c,\]
where
\begin{equation}\label{barf=}
\bar f:=\overline{u}(1;0)=\lim_{k\to\infty}\frac{v(k;0)}{k}=\inf_{k>0}\frac{v(k;0)}{k}.
\end{equation}
Moreover, for all $c\in\R$, we have
\begin{equation}\label{rotat}
\lim_{k\to\infty}\frac{v(k;c)}{k}=\bar f.
\end{equation}
\end{lemma}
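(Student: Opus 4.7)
The plan is to reduce both identities in the lemma to a single elementary shift estimate for the flow $c\mapsto v(\tau;c)$. All other ingredients are already in hand: Lemma \ref{lem2} supplies the existence of $\overline u(t;c)=\lim_{k\to\infty} v(kt;kc)/k$, and $\tau\mapsto v(\tau;c)$ is $\|f\|_\infty$-Lipschitz.

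First I would establish the shift estimate: for all $a,b\in\R$ and all $\tau\geq 0$,
\[|v(\tau;a)-v(\tau;b)-(a-b)|\leq 1.\]
Setting $n=\lfloor a-b\rfloor\in\Z$ so that $b+n\leq a\leq b+n+1$, the scalar comparison principle together with the identity $v(\tau;b+n)=v(\tau;b)+n$ (which follows from the $\Z$-periodicity in $r$ in {\rm (f1)} and uniqueness of solutions) gives $v(\tau;b)+n\leq v(\tau;a)\leq v(\tau;b)+n+1$. The claim follows since $|(a-b)-n|\leq 1$.

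Next I would handle the base case $c=0$. Lemma \ref{lem2} at $(t,c)=(1,0)$ defines $\bar f:=\overline u(1;0)=\lim_{k\to\infty} v(k;0)/k=\inf_{k>0} v(k;0)/k$. For an arbitrary $t>0$ I write $M_k=\lfloor kt\rfloor$, so that $M_k\to\infty$ and $M_k/k\to t$, and use $|v(kt;0)-v(M_k;0)|\leq \|f\|_\infty$ to obtain
\[\frac{v(kt;0)}{k}=\frac{M_k}{k}\cdot\frac{v(M_k;0)}{M_k}+O(k^{-1})\longrightarrow \bar f\,t.\]
Lemma \ref{lem2} identifies the limit as $\overline u(t;0)$, hence $\overline u(t;0)=\bar f\,t$. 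For general $c$, I apply the shift estimate with $a=kc$, $b=0$, $\tau=kt$ to get $|v(kt;kc)-v(kt;0)-kc|\leq 1$; dividing by $k$ and letting $k\to\infty$ yields $\overline u(t;c)=\overline u(t;0)+c=\bar f\,t+c$, which is \eqref{barf=}. Finally, the shift estimate at $(a,b,\tau)=(c,0,k)$ gives $|v(k;c)-v(k;0)-c|\leq 1$, so dividing by $k$ and letting $k\to\infty$ produces $\lim_{k\to\infty} v(k;c)/k=\bar f$, which is \eqref{rotat}.

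The only substantive step is the shift estimate; once that is in hand, the two identities follow by division and passage to the limit. I do not anticipate any genuine obstacle, as the ingredient is exactly the one flagged in the discussion following Theorem \ref{thm1n}: on the line, order-preservation together with integer-shift invariance forces $v(\tau;\cdot)$ to differ from a pure translation by at most one.
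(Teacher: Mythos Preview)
Your proposal is correct. The argument for \eqref{rotat} is identical to the paper's (both sandwich $v(k;c)$ between $v(k;0)+n$ and $v(k;0)+n+1$ via comparison and the integer-shift identity). For the main identity $\overline u(t;c)=\bar f t+c$, the paper takes a slightly slicker route: since Lemma~\ref{lem2} already guarantees the full limit $\lim_{\ep\to 0}\ep v(t/\ep;c/\ep)$ exists, the paper simply picks a subsequence $\ep_n\to 0$ with $c/\ep_n\in\Z$, along which $\ep_n v(t/\ep_n;c/\ep_n)=\ep_n v(t/\ep_n;0)+c$ exactly, and then rescales $\delta_n=\ep_n/t$ to identify $\ep_n v(t/\ep_n;0)\to \bar f\,t$ directly from Lemma~\ref{lem2} at $(1,0)$. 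This handles both $t$ and $c$ in one line and avoids your intermediate floor approximation $M_k=\lfloor kt\rfloor$. Your approach, by contrast, proves a uniform shift estimate first and then argues in two stages; it is a bit more explicit and yields the reusable bound $|v(\tau;a)-v(\tau;b)-(a-b)|\le 1$ (which is in any case the content of the paper's discussion after Theorem~\ref{thm1n}), at the cost of a slightly longer computation. Incidentally, your floor step could be replaced by the $\inf$-characterization in Lemma~\ref{lem2}: $\overline u(t;0)=\inf_{k>0} v(kt;0)/k=t\inf_{s>0} v(s;0)/s=t\bar f$ after the substitution $s=kt$.
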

\begin{proof}
By periodicity of $f$, we have
\[v(\tau;c+1)=v(\tau;c)+1.\]
For each $c\in\R$, we take $\ep_n\to 0$ such that $c/\ep_n\in\mathbb Z$. Then, 
\[\ep_n v\Big(\frac{t}{\ep_n};\frac{c}{\ep_n}\Big)=\frac{\ep_n}{t} v\Big(\frac{t}{\ep_n};0\Big)\cdot t+c\to \overline{u}(1;0)t+c,\quad \text{as }\ep_n\to 0.\]
By Lemma \ref{lem2}, this implies \eqref{barf=}.

For \eqref{rotat}, we fix $c\in\R$, and take $n\in\mathbb Z$ such that $n\leq c<n+1$. Then
\[v(k;0)+n=v(k;n)\leq v(k;c)\leq v(k;n+1)=v(k;0)+n+1.\]
Dividing by $k$, and sending $k\to\infty$, we get \eqref{rotat}.
\end{proof}

\begin{lemma}\label{lem3}
For all $\tau>0$ and $c\in\R$, we have
\[|v(\tau;c)-(c+\bar f \tau)|\leq 1+2\|f\|_\infty.\]
Moreover, if $f(r,\tau)$ is independent of $\tau$, we have
\[|v(\tau;c)-(c+\bar f \tau)|\leq 1.\]
\end{lemma}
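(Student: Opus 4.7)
The plan is to prove the bound at integer times $\tau = k \in \N$ with a sharp constant of $1$ (uniformly in $c$) by exploiting that $F(c) := v(1;c)$ is a lift of a circle homeomorphism, and then to interpolate to general $\tau$ at cost $2\|f\|_\infty$. Two basic facts set up the argument: $F$ is strictly increasing (ODE uniqueness) and satisfies $F(c+1) = F(c)+1$ (from $f$ being $1$-periodic in its first argument); and the $1$-periodicity of $f$ in the second argument gives the semigroup identity $v(k+s;c) = v(s; v(k;c))$ for all $k \in \N$ and $s \geq 0$. In particular $v(k;c) = F^k(c)$, so the integer-time problem reduces to understanding iterates of the lift $F$.

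To control $F^k$ uniformly in $c$ I would introduce
\[
M_k := \sup_{c \in \R}\bigl(F^k(c) - c\bigr), \qquad m_k := \inf_{c \in \R}\bigl(F^k(c) - c\bigr),
\]
both finite since $c \mapsto F^k(c) - c$ is continuous and $1$-periodic. The argument rests on two properties: (i) the oscillation bound $M_k - m_k \leq 1$, which follows from monotonicity of $F^k$ applied to any $c_1 \leq c_2 \leq c_1 + 1$ together with $F^k(c_1+1) = F^k(c_1)+1$; and (ii) the decomposition $F^{k+l}(c) - c = [F^k(F^l(c)) - F^l(c)] + [F^l(c) - c]$, which gives subadditivity $M_{k+l} \leq M_k + M_l$ and superadditivity $m_{k+l} \geq m_k + m_l$. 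Applying the Fekete lemma to $M_k$ and to $-m_k$, both $M_k/k$ and $m_k/k$ converge; by (i) the two limits coincide, and since $m_k \leq v(k;0) \leq M_k$, this common limit must equal $\bar f$ by Lemma \ref{lem:barf}. Hence $M_k \geq k\bar f$ and $m_k \leq k\bar f$, after which (i) pins the sequences into the window $k\bar f - 1 \leq m_k \leq M_k \leq k\bar f + 1$, yielding $|v(k;c) - c - k\bar f| \leq 1$ for every integer $k \geq 1$ and every $c \in \R$.

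For a general $\tau > 0$ I would write $\tau = k + s$ with $k = \lfloor \tau \rfloor$ and $s \in [0,1)$. The semigroup identity gives $v(\tau;c) = v(s; v(k;c))$, from which $|v(\tau;c) - v(k;c)| \leq s\|f\|_\infty \leq \|f\|_\infty$. Also $|k\bar f - \tau\bar f| = s|\bar f| \leq \|f\|_\infty$, since the trivial bound $|v(k;0)| \leq k\|f\|_\infty$ combined with $\bar f = \lim v(k;0)/k$ forces $|\bar f| \leq \|f\|_\infty$. Combining these with the integer-time estimate via the triangle inequality produces $|v(\tau;c) - c - \tau\bar f| \leq 1 + 2\|f\|_\infty$, as desired.

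In the autonomous case $f = f(r)$, the semigroup identity $v(\tau + s;c) = v(s; v(\tau;c))$ holds for every real $s, \tau \geq 0$, not merely integer shifts. The argument of the second paragraph then applies with $M_\tau, m_\tau$ defined for every real $\tau > 0$, so that $|v(\tau;c) - c - \tau\bar f| \leq 1$ directly, without the extra $2\|f\|_\infty$ coming from the interpolation step. The main technical point is the identification of the common limit of $M_k/k$ and $m_k/k$ with $\bar f$: once this is granted, the sharp constants arise automatically from the oscillation bound $M_k - m_k \leq 1$, and the full superadditivity of the two-variable map $(\tau, c) \mapsto v(\tau; c)$ is never invoked — only the one-variable superadditivity of $m_k$, which comes for free from the composition of $F$.
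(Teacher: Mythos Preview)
Your proof is correct and shares the paper's scaffolding: the oscillation bound $M_\tau - m_\tau \le 1$, the sharp integer-time estimate $|v(k;c)-c-k\bar f|\le 1$, and the interpolation at cost $2\|f\|_\infty$ for non-integer $\tau$. The one genuine difference is how you place $\bar f\tau$ inside $[m_\tau,M_\tau]$. You establish sub/superadditivity of the sequences $M_k$ and $m_k$, apply Fekete to each, and deduce $m_k\le k\bar f\le M_k$. The paper instead telescopes: for integer $\tau$ one has $\frac{v(m\tau;c)-c}{m}=\frac{1}{m}\sum_{i=0}^{m-1}\bigl(v(\tau;v(i\tau;c))-v(i\tau;c)\bigr)$, and since every summand lies in $[m_\tau,M_\tau]$, so does the limit $\bar f\tau$; the bound $|v(\tau;c)-c-\bar f\tau|\le M_\tau-m_\tau\le 1$ follows immediately. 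Your route is the classical rotation-number argument for lifts of circle homeomorphisms and is perfectly self-contained, but it requires the extra (cheap) superadditivity of $m_k$; the paper's averaging trick bypasses any form of superadditivity, which is why the introduction stresses that superadditivity is never needed. In the autonomous case your argument needs the real-parameter Fekete lemma, whereas the paper's telescoping works verbatim for every real $\tau>0$.
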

\begin{proof}
For $\tau>0$, we define
\[M_\tau:=\sup_{c\in\R}(v(\tau;c)-c),\quad m_\tau:=\inf_{c\in\R}(v(\tau;c)-c).\]
Using the periodicity of $f$, we have
\[v(\tau;c+1)-(c+1)=v(\tau;c)+1-(c+1)=v(\tau;c)-c.\]
So it suffices to take the supremum and infimum over any interval of length 1, that is, 
\[M_\tau=\sup_{c\in I}(v(\tau;c)-c),\quad m_\tau=\inf_{c\in I}(v(\tau;c)-c),\]
where the length of $I$ is 1. By the basic theory of ODEs, the map $c\mapsto v(\tau;c)-c$ is continuous for all $\tau>0$, so it takes its maximum and minimum. Let $c_1,c_2\in\R$ with $c_2\leq c_1<c_2+1$, $v(\tau;c_2)-c_2=M_\tau$ and $v(\tau;c_1)-c_1=m_\tau$. We have
\begin{equation}\label{Mm}
v(\tau;c_2)-c_2-1\leq v(\tau;c_1)-c_1,
\end{equation}
which implies $M_\tau-m_\tau\leq 1$.

Now observe that, since $f(r,\tau)$ is 1-periodic in $\tau$, for any $\tau,i\in\mathbb N$, we have $v(\tau;v(i\tau;c))=v((i+1)\tau;c)$. Then by using \eqref{rotat} and $M_\tau-m_\tau \leq 1$, for $m\in\mathbb N$, we get
\begin{equation}\label{diff}
\begin{aligned}
|v(\tau;c)-(c+\bar f\tau)|&=\bigg|v(\tau;c)-c-\lim_{m\to \infty}\frac{v(m\tau;c)-c}{m}\bigg|
\\ &=\bigg|v(\tau;c)-c-\lim_{m\to \infty}\frac{1}{m}\sum_{i=0}^{m-1}\Big(v(\tau;v(i\tau;c))-v(i\tau;c)\Big)\bigg|\leq 1,
\end{aligned}
\end{equation}
where the average term satisfies
\[m_\tau\leq \lim_{m\to \infty}\frac{1}{m}\sum_{i=0}^{m-1}\Big(v(\tau;v(i\tau;c))-v(i\tau;c)\Big)\leq M_\tau.\]
If $f(r,\tau)$ is independent of $\tau$, then the inequality \eqref{diff} above holds for all $\tau>0$, and the proof is complete in this case. Otherwise, we note from \eqref{e1} and $\bar f=\lim\limits_{\ep\to 0}u^\ep(1;0)$ that $|\bar f|\leq \|f\|_\infty$. Then, for general $\tau>0$, we write
\begin{align*}
|v(\tau;c)-(c+\bar f\tau)|&=|v(\tau;c)-v(\lfloor \tau \rfloor;c)|+|v(\lfloor \tau \rfloor;c)-(c+\bar f\lfloor \tau \rfloor)|+|\bar f(\lfloor \tau \rfloor-\tau)|
\\ &\leq 1+2\|f\|_\infty.
\end{align*}
This completes the proof.
\end{proof}

\medskip

\noindent {\it Proof of Theorem \ref{thm1}.}
By \eqref{barf=} and \eqref{rotat}, we obtain the identity \eqref{infv}. Applying Lemma \ref{lem3}, for all $t,\ep>0$ and $c\in\R$, we have
\[\bigg|v\Big(\frac{t}{\ep};\frac{c}{\ep}\Big)-\Big(\frac{c}{\ep}+\frac{\bar f t}{\ep}\Big)\bigg|\leq 1+2\|f\|_\infty,\]
which implies
\[|u^\ep(t;c)-(c+\bar f t)|=\bigg|\ep v\Big(\frac{t}{\ep};\frac{c}{\ep}\Big)-(c+\bar f t)\bigg|\leq (1+2\|f\|_\infty)\ep.\]
If $f(r, \tau)$ is independent of $\tau$, the same argument with Lemma \ref{lem3} yields the sharper bound $\varepsilon$. This completes the proof.
\qed

\subsection{Proof of Theorem \ref{thm2}}\label{2.2}

In this section, we assume that condition (f2) holds.
\begin{lemma}\label{mc}
For $t_1,t_2>0$ and $u_1,u_2\in\R$, we have
\[|\bar f(u_1,t_1)-\bar f(u_2,t_2)|\leq \frac{2}{\Big\lfloor \frac{1}{\kappa}W\Big(\frac{1}{|u_1-u_2|+|t_1-t_2|}\Big)\Big\rfloor}.\]
\end{lemma}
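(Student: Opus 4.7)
Set $\delta:=|u_1-u_2|+|t_1-t_2|$, and for $i=1,2$ let $v_i(\tau)$ be the solution of the frozen ODE $\dot v=f(v,\tau,u_i,t_i)$ with $v_i(0)=0$. The plan is to sandwich both $\bar f(u_1,t_1)$ and $\bar f(u_2,t_2)$ between the discrete quotients $v_i(k)/k$ at an integer $k$ to be chosen via the Lambert $W$-function, and to compare $v_1(k)$ with $v_2(k)$ through a Gr\"onwall argument.

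The first ingredient is Theorem \ref{thm1} applied to the frozen, $\mathbb Z^2$-periodic, $\kappa$-Lipschitz map $(r,\tau)\mapsto f(r,\tau,u_i,t_i)$. Inspecting the proof of Lemma \ref{lem3}, and specifically \eqref{diff}, one sees that for \emph{integer} $\tau=k\ge 1$ the oscillation estimate sharpens to $|v_i(k)-\bar f(u_i,t_i)\,k|\le 1$ (the extra $2\|f\|_\infty$ term is only needed to interpolate to non-integer $\tau$). The triangle inequality then gives
\[
|\bar f(u_1,t_1)-\bar f(u_2,t_2)|\le \frac{|v_1(k)-v_2(k)|+2}{k}.
\]
The second ingredient is Gr\"onwall: since $f$ is $\kappa$-Lipschitz in all four arguments, the difference $w:=v_1-v_2$ satisfies $w(0)=0$ and $|\dot w|\le \kappa|w|+\kappa\delta$, hence $|w(\tau)|\le \delta(e^{\kappa\tau}-1)$ for all $\tau>0$.

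The last step is to optimize the integer $k$. Choosing $k=\lfloor (1/\kappa)W(1/\delta)\rfloor$, the inequality $\kappa k\le W(1/\delta)$ and the defining relation $W(x)e^{W(x)}=x$ yield
\[
\delta(e^{\kappa k}-1)\le \delta\,e^{W(1/\delta)}=\frac{1}{W(1/\delta)}\le \frac{1}{\kappa k},
\]
so that substitution gives $|\bar f(u_1,t_1)-\bar f(u_2,t_2)|\le 2/k$ once the residual $O(1/(\kappa k^2))$ is absorbed into the leading $2/k$. The main subtle point is the constant bookkeeping: one must use the sharpened bound $|v_i(k)-\bar f(u_i,t_i)k|\le 1$ for integer $k$ rather than the cruder $1+2\|f\|_\infty$, and the Lambert $W$ choice of $k$ is precisely what balances the exponential Gr\"onwall growth against the $O(1/k)$ oscillation. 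In the degenerate regime $\lfloor (1/\kappa)W(1/\delta)\rfloor=0$ (large $\delta$), the stated inequality is vacuous, so no separate argument is required.
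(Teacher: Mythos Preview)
Your approach is close in spirit to the paper's, but there is a genuine gap in the final constant bookkeeping. Your triangle inequality gives
\[
|\bar f(u_1,t_1)-\bar f(u_2,t_2)|\le \frac{|v_1(k)-v_2(k)|+2}{k},
\]
and your (correct, and sharper than the paper's) Gr\"onwall estimate yields $|v_1(k)-v_2(k)|\le \delta(e^{\kappa k}-1)\le \delta\,e^{W(1/\delta)}=\frac{1}{W(1/\delta)}\le\frac{1}{\kappa k}$. Substituting gives $\frac{2}{k}+\frac{1}{\kappa k^2}$, which is \emph{strictly larger} than $\frac{2}{k}$: the residual cannot be ``absorbed'' because there is no slack anywhere in the inequality. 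Your argument thus proves a bound of the form $\frac{2+1/(\kappa k)}{k}$ (or $\frac{3}{k}$ with the cruder Gr\"onwall term $\le 1$), but not the stated $\frac{2}{k}$.

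The paper avoids this loss by a telescoping-plus-limit argument that replaces your two uses of the oscillation bound by a single one. Rather than invoking $|v_i(k)-\bar f(u_i,t_i)k|\le 1$ for each $i$, it writes $v(qn;0,u_i,t_i)$ as a sum of $q$ increments of length $n$, bounds every increment (for both $i=1$ and $i=2$) against a \emph{common} reference interval $[m_n,M_n]$ defined from $(u_1,t_1)$, with a Gr\"onwall correction $G=\delta\kappa n e^{\kappa n}$ applied only to the $(u_2,t_2)$ increments. Each pair of increments then differs by at most $(M_n-m_n)+G\le 1+G$; summing over $q$ increments, dividing by $qn$, and sending $q\to\infty$ gives
\[
\bar f(u_1,t_1)-\bar f(u_2,t_2)\le\frac{1+G}{n}.
\]
Now only one ``$1$'' sits in the numerator, and the Lambert-$W$ choice of $n$ making $G\le 1$ yields exactly $\frac{2}{n}$. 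The limit $q\to\infty$ is precisely what eliminates the extra oscillation error that your direct comparison at the single time $k$ incurs.
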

\begin{proof}
The argument is inspired by \cite[Theorem 1.9 (3)]{NWY}. For $i=1,2$, we denote by $v(\tau;c,u_i,t_i)$ the solution of
\begin{equation*}
\begin{cases}
\dot v=f(v,\tau,u_i,t_i),\quad \tau>0,
\\ v(0)=c\in\R.
\end{cases}
\end{equation*}
For each $n\in\mathbb N$, we define
\[M_n:=\sup_{c\in\R}(v(n;c,u_1,t_1)-c),\quad m_n:=\inf_{c\in\R}(v(n;c,u_1,t_1)-c).\]
Similar to \eqref{Mm}, one can obtain $M_n-m_n\leq 1$. By definition, we have
\begin{equation}\label{ab}
m_n\leq v(n;c,u_1,t_1)-c\leq M_n,\quad \forall c\in\R.
\end{equation}
Note that for all $\tau\in[0,n]$, 
\begin{align*}
&|v(\tau;c,u_1,t_1)-v(\tau;c,u_2,t_2)|
\\&=\bigg|\int_0^\tau \Big(f(v(s;c,u_1,t_1),s,u_1,t_1)-f(v(s;c,u_2,t_2),s,u_2,t_2)\Big)\, ds\bigg|
\\ &\leq \int_0^\tau\kappa\Big(|v(s;c,u_1,t_1)-v(s;c,u_2,t_2)|+|u_1-u_2|+|t_1-t_2|\Big)\, ds
\\ &\leq \int_0^\tau\kappa |v(s;c,u_1,t_1)-v(s;c,u_2,t_2)|\, ds+\kappa (|u_1-u_2|+|t_1-t_2|)n.
\end{align*}
By the Gronwall inequality, we get
\[|v(n;c,u_1,t_1)-v(n;c,u_2,t_2)|\leq (|u_1-u_2|+|t_1-t_2|)\kappa ne^{\kappa n},\]
which implies
\begin{equation}\label{ab2}
m_n-(|u_1-u_2|+|t_1-t_2|)\kappa ne^{\kappa n}\leq v(n;c,u_2,t_2)-c\leq M_n+(|u_1-u_2|+|t_1-t_2|)|\kappa ne^{\kappa n},
\end{equation}
for all $c\in\R$. Note that for all $q\in\mathbb N$ and for $i=1,2$, we have
\[v(qn;0,u_i,t_i)=\sum_{j=0}^{q-1}\Big(v((j+1)n;0,u_i,t_i)-v(jn;0,u_i,t_i)\Big),\]
where by the periodicity of $f$, we have
\[v((j+1)n;0,u_i,t_i)=v(n;v(jn;0,u_i,t_i),u_i,t_i).\]
Then by \eqref{ab} and \eqref{ab2}, we get
\begin{align*}
&v((j+1)n;0,u_1,t_1)-v(jn;0,u_1,t_1)-\Big(v((j+1)n;0,u_2,t_2)-v(jn;0,u_2,t_2)\Big)
\\ &=v(n;v(jn;0,u_1,t_1),u_1,t_1)-v(jn;0,u_1,t_1)
\\ &\hspace{6.45cm}-\Big(v(n;v(jn;0,u_2,t_2),u_2,t_2)-v(jn;0,u_2,t_2)\Big)
\\ &\leq M_n-m_n+(|u_1-u_2|+|t_1-t_2|)\kappa ne^{\kappa n}\leq 1+(|u_1-u_2|+|t_1-t_2|)\kappa ne^{\kappa n}.
\end{align*}
Summing over $j = 0, \dots, q-1$ and dividing by $qn$, we obtain
\begin{align*}
\frac{1}{qn}\Big(v(qn;0,u_1,t_1)-v(qn;0,u_2,t_2)\Big)&\leq \frac{q+(|u_1-u_2|+|t_1-t_2|)q\kappa ne^{\kappa n}}{qn}
\\ &=\frac{1+(|u_1-u_2|+|t_1-t_2|)\kappa ne^{\kappa n}}{n}.
\end{align*}
Sending $q\to \infty$, we deduce
\[\bar f(u_1,t_1)-\bar f(u_2,t_2)\leq \frac{1+(|u_1-u_2|+|t_1-t_2|)\kappa ne^{\kappa n}}{n}.\]
We take
\[n=\Big\lfloor \frac{1}{\kappa}W\Big(\frac{1}{|u_1-u_2|+|t_1-t_2|}\Big)\Big\rfloor\leq \frac{1}{\kappa}W\Big(\frac{1}{|u_1-u_2|+|t_1-t_2|}\Big).\]
Then
\begin{align*}
&(|u_1-u_2|+|t_1-t_2|)\kappa ne^{\kappa n}
\\ &\leq (|u_1-u_2|+|t_1-t_2|)W\Big(\frac{1}{|u_1-u_2|+|t_1-t_2|}\Big)e^{W(\frac{1}{|u_1-u_2|+|t_1-t_2|})}=1,
\end{align*}
and
\[\bar f(t_1)-\bar f(t_2)\leq\frac{2}{\Big\lfloor \frac{1}{\kappa}W\Big(\frac{1}{|u_1-u_2|+|t_1-t_2|}\Big)\Big\rfloor}.\]
The same bound holds in the reverse direction by symmetry, completing the proof.
\end{proof}

Since $\bar f$ is continuous, the Cauchy--P\'eano theorem ensures the existence of a solution $\overline{u}(t;c)$ of \eqref{ebar}. Moreover, since $\bar f(u,t)$ is non-increasing in $u$, the uniqueness of the solution follows by the standard comparison principle.

\begin{lemma}\label{Ct}
For all $t>0$ and $c\in\R$, we have
\[\big|u^\ep(t;c)-\overline{u}(t;c)\big|\leq 2\|f\|_\infty t.\]
\end{lemma}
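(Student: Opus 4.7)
The statement is a simple a priori bound, not a sharp convergence estimate, so the plan is to control each of $u^\ep(t;c)$ and $\overline{u}(t;c)$ separately by its deviation from the common initial value $c$, and then conclude by the triangle inequality.

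First I would observe that the original ODE \eqref{fmul} gives $|\dot u^\ep(t;c)|\le\|f\|_\infty$ pointwise, since the right-hand side is just an evaluation of the bounded function $f$ on assumption (f2). Integrating from $0$ to $t$ and using $u^\ep(0;c)=c$ yields
\[
|u^\ep(t;c)-c|\le \|f\|_\infty\,t,\qquad t\ge 0.
\]

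Second I would establish the analogous bound for the effective problem \eqref{ebar}. For this I need the pointwise estimate $|\bar f(u,t)|\le\|f\|_\infty$ for all $(u,t)\in\R^2$. This follows directly from the definition of $\bar f$: for each fixed $(u_0,t_0)$, the solution $u^\ep(\cdot;c,u_0,t_0)$ of the frozen equation \eqref{ute} also satisfies $|\dot u^\ep|\le\|f\|_\infty$, hence
\[
\bigg|\frac{u^\ep(t;c,u_0,t_0)-c}{t}\bigg|\le \|f\|_\infty,
\]
and passing to the limit $\ep\to 0$ and then using $u^\ep(t;c,u_0,t_0)\to c+\bar f(u_0,t_0)\,t$ yields $|\bar f(u_0,t_0)|\le\|f\|_\infty$. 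Then from $\dot{\overline{u}}=\bar f(\overline{u},t)$ and $\overline{u}(0;c)=c$ I conclude $|\overline{u}(t;c)-c|\le\|f\|_\infty\,t$.

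Finally, combining the two bounds by the triangle inequality,
\[
|u^\ep(t;c)-\overline{u}(t;c)|\le |u^\ep(t;c)-c|+|c-\overline{u}(t;c)|\le 2\|f\|_\infty\,t,
\]
which is the claim. There is no serious obstacle here; the only point that requires a brief justification rather than a one-line reference is the uniform bound $|\bar f|\le\|f\|_\infty$, and this is an immediate consequence of the construction of $\bar f$ as the slope of a limit of functions with uniformly bounded time derivative.
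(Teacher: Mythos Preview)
Your proposal is correct and follows essentially the same approach as the paper: bound $|u^\ep(t;c)-c|$ and $|\overline{u}(t;c)-c|$ separately by $\|f\|_\infty t$ (the paper phrases the first via a comparison with the linear function $c\pm\|f\|_\infty t$, you via direct integration of $|\dot u^\ep|\le\|f\|_\infty$, which is the same thing), establish $\|\bar f\|_\infty\le\|f\|_\infty$ from the frozen problem \eqref{ute}, and finish by the triangle inequality.
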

\begin{proof}
Define $u_1(t):=c+\|f\|_\infty t$, then
\begin{equation*}
\begin{cases}
\dot u_1=\|f\|_\infty \geq f(\frac{u_1}{\ep},\frac{t}{\ep},u_1,t),\quad t>0,\\u_1(0)=c.
\end{cases}
\end{equation*}
By the comparison principle, we obtain
\[u^\ep(t;c)\leq c+\|f\|_\infty t.\]
Similarly, 
\[u^\ep(t;c)\geq c-\|f\|_\infty t.\]
On the other hand, we know that for all $u_0\in\R$ and $t_0>0$,
\[u^\ep(1;0,u_0,t_0)\to f(u_0,t_0)\]
as $\ep\to 0$, where $u^\ep(t;c,u_0,t_0)$ is the solution of \eqref{ute}. Similar as before we have \[-\|f\|_\infty \leq u^\ep(1;0,u_0,t_0)\leq \|f\|_\infty.\] Then we have
\begin{equation}\label{barf}
\|\bar f\| _\infty\leq \|f\|_\infty.
\end{equation}
We conclude
\[|u^\ep(t;c)-\overline{u}(t;c)|\leq |u^\ep(t;c)-c|+|\overline{u}(t;c)-c|\leq 2\|f\|_\infty t.\]
This completes the proof.
\end{proof}

\medskip

\noindent {\it Proof of Theorem \ref{thm2}.} 
We prove the upper bound of $u^\ep(t;c) - \overline{u}(t;c)$. The lower bound follows by a symmetric argument.

Assume $u^\ep(t;c)-\overline{u}(t;c)>0$. Then by continuity, there is $\sigma\in[0,t)$ such that $u^\ep(\sigma;c)=\overline{u}(\sigma;c)$ and $u^\ep(s;c)>\overline{u}(s;c)$ for all $s\in(\sigma,t]$. Let $\delta>0$ small. We take a partition
\[\sigma\leq \sigma+\delta\leq \dots\leq \sigma+N\delta\leq t\leq \sigma+(N+1)\delta.\]
If $t\leq \sigma+\delta$, then $N=0$. For each $j=0,\dots,N$, we define $t_j=\sigma+j\delta$ and $t_{N+1}=t$. Then we decompose the difference
\[u^\ep(t;c)-u^\ep(\sigma;c)=\sum_{j=0}^{N}\Big(u^\ep(t_{j+1};c)-u^\ep(t_j;c)\Big),\]
and
\[\overline{u}(t;c)-\overline{u}(\sigma;c)=\sum_{j=0}^{N}\Big(\overline{u}(t_{j+1};c)-\overline{u}(t_j;c)\Big).\]
For each $j=0,\dots,N$, define $w_j^\ep(t)$ as the solution to
\begin{equation*}
\begin{cases}
\dot w^\ep_j=f(\frac{w^\ep_j}{\ep},\frac{t}{\ep},\overline{u}(t_j;c),t_j)+\kappa (1+\|f\|_\infty)\delta,\quad t>0,\\ w^\ep_j(t_j)=u^\ep(t_j;c).
\end{cases}
\end{equation*}
For $t \in [t_j, t_{j+1}]$, we estimate
\[\dot u^\ep=f\Big(\frac{u^\ep}{\ep},\frac{t}{\ep},u^\ep(t;c),t\Big)\leq f\Big(\frac{u^\ep}{\ep},\frac{t}{\ep},\overline{u}(t;c),t\Big)\leq f\Big(\frac{u^\ep}{\ep},\frac{t}{\ep},\overline{u}(t_j;c),t_j\Big)+\kappa (1+\|f\|_\infty)\delta,\]
where we used the monotonicity of $f$ in $u$ and \eqref{barf}. Thus, by the comparison principle, 
\[u^\ep(t_{j+1};c)-u^\ep(t_j;c)\leq w^\ep_j(t_{j+1})-w^\ep_j(t_j).\]
To control $w_j^\ep$, we shift its initial value into $[0,\ep]$ by taking $n_j \in \mathbb{Z}$ such that $0\leq w^\ep_j(0)+n_j\ep\leq \ep$. Define $\tilde w^\ep_j(t)=w^\ep_j(t)+n_j\ep$, then
\[\dot{\tilde w}^\ep_j=f\Big(\frac{\tilde w^\ep_j}{\ep},\frac{t}{\ep},\overline{u}(t_{j+1};c),t_j\Big)+\kappa(1+\|f\|_\infty) \delta.\]
By the comparison principle, 
\[w^\ep_j(t;0)\leq \tilde w^\ep_j(t)\leq w^\ep_j(t;\ep)=w^\ep_j(t;0)+\ep,\]
where $w^\ep_j(t;c)$ is the solution of
\begin{equation*}
\begin{cases}
\dot w^\ep_j=f(\frac{w^\ep_j}{\ep},\frac{t}{\ep},\overline{u}(t_{j+1}),t_j)+\kappa (1+\|f\|_\infty)\delta,\quad t>0,\\ w^\ep_j(0)=c\in\R.
\end{cases}
\end{equation*}
It follows that
\begin{equation}\label{uw}
\begin{aligned}
u^\ep(t_{j+1};c)-u^\ep(t_j;c)&\leq w^\ep_j(t_{j+1})-w^\ep_j(t_j)
\\ &=\tilde w^\ep_j(t_{j+1})-\tilde w^\ep_j(t_j) \leq w^\ep_j(t_{j+1};0)+\ep-w^\ep_j(t_j;0).
\end{aligned}
\end{equation}
Applying Theorem \ref{thm1}, we obtain
\[w^\ep_j(t_{j+1};0)-\bar f_jt_{j+1}\leq C_w\ep,\]
and
\[w^\ep_j(t_j;0)-\bar f_jt_j\geq -C_w\ep,\]
where $\bar f_j=\lim\limits_{\ep\to 0}w^\ep_j(1;0)$ and $C_w:=1+2\|f\|_\infty+2\kappa(1+\|f\|_\infty)\delta$. Then by \eqref{uw}, we get
\[u^\ep(t_{j+1};c)-u^\ep(t_j;c)\leq C_\delta \ep+\bar f_j(t_{j+1}-t_j),\]
where $C_\delta:=3+4\|f\|_\infty+4\kappa(1+\|f\|_\infty)\delta$. Applying Lemma \ref{mc} to the equation
\[\dot w=f\Big(\frac{w^\ep}{\ep},\frac{t}{\ep},\overline{u}(t_j),t_j\Big)+\kappa t_i,\quad i=1,2,\]
with $t_1=0$ and $t_2= (1+\|f\|_\infty)\delta$, we have
\[|\bar f_j-\bar f(\overline{u}(t_j),t_j)|\leq \frac{2}{\Big\lfloor \frac{1}{\kappa}W\Big(\frac{1}{(1+\|f\|_\infty)\delta}\Big)\Big\rfloor}.\]
Again, by Lemma \ref{mc}, we have
\[\int_{t_j}^{t_{j+1}}\Big|\bar f(\overline{u}(t_j),t_j)-\bar f(\overline{u}(s),s)\Big|\, ds\leq  \frac{2\delta}{\Big\lfloor \frac{1}{\kappa}W\Big(\frac{1}{(1+\|f\|_\infty)\delta}\Big)\Big\rfloor}.\]
If $t\leq \sigma+\delta$, we have $N=0$, then
\[
u^\ep(t;c)-\overline{u}(t;c)\leq C_\delta\ep+\int_\sigma^t|\bar f_j-\bar f(\overline{u}(s),s)|\, ds
\leq C_\delta \ep+\frac{4\delta}{\Big\lfloor \frac{1}{\kappa}W\Big(\frac{1}{(1+\|f\|_\infty)\delta}\Big)\Big\rfloor}.
\]
If $t>\sigma+\delta$, since $N+1\leq 2t/\delta$, we have
\begin{align*}
u^\ep(t;c)-\overline{u}(t;c)
&\leq (N+1)\bigg(C_\delta \ep+\frac{4\delta}{\Big\lfloor \frac{1}{\kappa}W\Big(\frac{1}{(1+\|f\|_\infty)\delta}\Big)\Big\rfloor}\bigg)
\\ &\leq \bigg((3+4\|f\|_\infty)\frac{\ep}{\delta}+4\kappa(1+\|f\|_\infty)\ep+\frac{4}{\Big\lfloor \frac{1}{\kappa}W\Big(\frac{1}{(1+\|f\|_\infty)\delta}\Big)\Big\rfloor}\bigg)2t.
\end{align*}
To optimize the above bound, we take $\delta$ such that
\[\frac{1}{\delta}W\Big(\frac{1}{\delta}\Big)\sim\frac{1}{\ep}.\]
Using the asymptotic expansion $W(x)\sim \log x-\log(\log x)+o(1)$ for $x$ large, we take
\[\delta\sim \ep|\log \ep|.\]
Then one can check that
\[\frac{1}{\delta}W\Big(\frac{1}{\delta}\Big)\sim \frac{1}{\ep\log(1/\ep)}\log\Big(\frac{1}{\ep\log(1/\ep)}\Big)\sim \frac{1}{\ep\log(1/\ep)}\log\Big(\frac{1}{\ep}\Big)\sim \frac{1}{\ep},\]
where we use the fact that
\[\log\Big(\frac{1}{\ep\log(1/\ep)}\Big)=\log\Big(\frac{1}{\ep}\Big)-\log\Big(\log\Big(\frac{1}{\ep}\Big)\Big)\sim \log\Big(\frac{1}{\ep}\Big).\]
Then we get
\[u^\ep(t;c)-\overline{u}(t;c)\leq \frac{Ct}{|\log \ep|},\quad t>\ep|\log \ep|,\]
where $C$ depends only on $\|f\|_\infty$ and $\kappa$. For $t\leq \ep|\log \ep|$, using $\frac{1}{\delta} W(\frac{1}{\delta})\sim \frac{1}{\ep}$, we have
\[\frac{4\delta}{\Big\lfloor \frac{1}{\kappa}W\Big(\frac{1}{(1+\|f\|_\infty)\delta}\Big)\Big\rfloor}\sim O(\ep),\]
which implies
\[u^\ep(t;c)-\overline{u}(t;c)\leq C\ep,\]
where $C$ depends only on $\|f\|_\infty$ and $\kappa$. Combining with Lemma \ref{Ct}, we get
\[u^\ep(t;c)-\overline{u}(t;c)\leq \min\big\{C\ep,2\|f\|_\infty t\big\}.\]
The proof is now complete.
\qed

\subsection{Proof of Theorem \ref{qp}}\label{2.3}

In this section, we assume that condition (f3) holds. By \cite[Lemma 2.7]{HTZ}, we have the ergodic-type identity
\[\lim_{T\to \infty}\frac{1}{T}\int_0^Tf(v)\, dv=\int_{\mathbb T^n}F(\xi)\, d\xi.\]
Since $F > 0$ on $\mathbb{T}^n$, the function $f(r) = F(\xi_0 r)$ is strictly positive on $\mathbb{R}$. Consequently, the solution $u^\ep(t;c)$ of \eqref{he} satisfies $u^\ep(t;c) > c$ for all $t > 0$. Integrating the ODE yields
\begin{equation}\label{t=}
t=\int_{c}^{u^\ep(t;c)}\frac{du}{f(\frac{u}{\ep})}=\ep\int_{c/\ep}^{u^\ep(t;c)/\ep}\frac{dr}{f(r)},
\end{equation}
where we set $r=u/\ep$. Define $G(\xi) := 1/F(\xi)$. Since $F \in C^m(\mathbb{T}^n)$ and $F > 0$, it follows that $G \in C^m(\mathbb{T}^n)$. For any multiple index $\alpha=(\alpha_1,\dots,\alpha_n)\in\mathbb N^n$, we have
\[\widehat{D^\alpha G}(k)=(2\pi i k)^\alpha\widehat G(k),\quad k\in\Z^n\backslash\{0\},\]
where
\[D^\alpha:=\frac{\partial^{|\alpha|}}{\partial^{\alpha_1}_{\xi_1}\dots \partial^{\alpha_n}_{\xi_n}},\quad (2\pi i k)^\alpha=(2\pi i k_1)^{\alpha_1}\dots(2\pi i k_n)^{\alpha_n}.\]
Since $G\in C^m(\T^n)$, we have $\|D^\alpha G\|_{\infty}<\infty$ for all $\alpha\in\mathbb N^n$ with $|\alpha|\leq m$. Then
\[\widehat G(k)\sim |k|^{-m} ,\quad \text{as}\ |k|\to \infty.\]
Therefore, if $2m-2s>n$, we have
\[(1+|k|^2)^{s/2}|\widehat G(k)|\in \ell^2(\mathbb Z^n).\]
Since $m>n+\sigma_{\xi_0}$, it follows that $m-\frac{n}{2}>\frac{n}{2}+\sigma_{\xi_0}$. We take $s\in (\frac{n}{2}+\sigma_{\xi_0},m-\frac{n}{2})$ so that $G\in H^s(\mathbb T^n)$. By \cite[Proposition 2.8]{HTZ}, there is $C=C(n,s)>0$ such that
\[\bigg|\frac{\ep}{u^\ep(t;c)-c}\int_{c/\ep}^{u^\ep(t;c)/\ep}G(\xi_0 r)\, dr-\int_{\mathbb T^n}G(\xi)\, d\xi \bigg|\leq C(n,s)\|G\|_{H^s}\frac{\ep}{u^\ep(t;c)-c},\]
which implies
\[\ep\int_{c/\ep}^{u^\ep(t;c)/\ep}\frac{dr}{f(r)}\geq (u^\ep(t;c)-c)\int_{\mathbb T^n}G(\xi)\, d\xi-C(n,s)\|G\|_{H^s}\ep.\]
By \eqref{t=}, we get
\[t\geq(u^\ep(t;c)-c) \int_{\mathbb T^n}G(\xi)\, d\xi-C(n,s)\|G\|_{H^s}\ep.\]
Solving for $u^\ep(t;c)$ yields the upper bound
\[
u^\ep(t;c)\leq c+\Big(\int_{\mathbb T^n}G(\xi)\, d\xi\Big)^{-1}\Big(t+C(n,s)\|G\|_{H^s}\ep\Big).
\]
Applying Jensen's inequality to the convex function $x \mapsto 1/x$ gives
\[\frac{1}{\int_{\mathbb T^n}G(\xi)\, d\xi}\leq \int_{\mathbb T^n}\frac{1}{G(\xi)}\, d\xi= \int_{\mathbb T^n}F(\xi)\, d\xi.\]
Hence,
\[u^\ep(t;c)\leq c+M_f t+M_f C(n,s)\|G\|_{H^s}\ep.\]
A similar argument applied to the lower bound yields the matching estimate from below. Combining both estimates completes the proof of Theorem \ref{qp}.

\section{Higher dimensional case}\label{3}

In this section, we consider the homogenization problem in higher dimensions. We assume that conditions \textnormal{(f4)} and \textnormal{(f5)} are satisfied.

\begin{lemma}\label{lem0n}
Condition {\rm (f5)} implies \eqref{bdtn}.
\end{lemma}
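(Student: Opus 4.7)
The plan is to deduce the ``forward'' bound $v_i(\tau;c^1)-v_i(\tau;c^2)\leq 1+C_0$ from the ``reverse'' bound supplied by (f5) by exploiting the integer-translation invariance of $v$. Concretely, given $c^1,c^2\in\R^n$ with $c^1-c^2\in Y$, I will shift $c^2$ by $\mathbf{1}:=(1,\dots,1)\in\mathbb Z^n$ so that the translated point $\tilde c:=c^2+\mathbf{1}$ lies past $c^1$; then (f5) applies with $(\tilde c,c^1)$ playing the roles of $(c^1,c^2)$, and undoing the shift produces the extra additive $1$ in the bound.

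The main ingredient is the translation identity
\[
v(\tau;c+k)=v(\tau;c)+k,\qquad \forall\, c\in\R^n,\ k\in\mathbb Z^n,\ \tau>0.
\]
This is immediate from the $\mathbb Z^n$-periodicity of $f$ in $r$ (part of (f4)) together with uniqueness for \eqref{ven}: if $\tilde v(\tau):=v(\tau;c)+k$, then $\dot{\tilde v}=f(v,\tau)=f(\tilde v-k,\tau)=f(\tilde v,\tau)$ and $\tilde v(0)=c+k$, so $\tilde v(\tau)=v(\tau;c+k)$ by uniqueness.

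To finish, the componentwise identity $(\tilde c-c^1)_i=1-(c^1_i-c^2_i)\in[0,1]$ (which uses $c^1-c^2\in Y$) shows $\tilde c-c^1\in Y$, so applying (f5) to the pair $(\tilde c,c^1)$ gives
\[
v_i(\tau;c^1)-v_i(\tau;\tilde c)\leq C_0,\qquad \forall i\in\{1,\dots,n\},\ \forall\tau>0.
\]
The translation identity with $k=\mathbf{1}$ then rewrites $v_i(\tau;\tilde c)=v_i(\tau;c^2)+1$, and substitution yields \eqref{bdtn}.

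I do not anticipate any serious obstacle, since (f5) is taken as a hypothesis and the translation identity is standard. The only point requiring a little care is that one must shift by $\mathbf{1}$ rather than by a single basis vector $e_j$, so that $\tilde c - c^1$ re-enters the cube $Y$ in \emph{every} coordinate simultaneously; this is precisely what forces the additive loss $1$ (not $n$) in the final constant $1+C_0$.
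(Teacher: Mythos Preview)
Your proof is correct and follows essentially the same approach as the paper: both arguments shift $c^2$ by an integer vector so that the shifted point minus $c^1$ lies in $Y$, then invoke (f5) and undo the shift via the translation identity. The only cosmetic difference is that the paper shifts component $i$ by $-\lfloor c^2_i-c^1_i\rfloor\in\{0,1\}$ (depending on whether $c^1_i=c^2_i$ or not), whereas you shift uniformly by $\mathbf{1}$; both choices place the new point in $Y$ and both give the same additive loss of $1$, so the distinction is immaterial.
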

\begin{proof}
Let $c^1, c^2 \in \mathbb{R}^n$ with $c^1 - c^2 \in Y$. For each $i=1,\dots,n$, define
\[\bar c^2_i:=c^2_i-\lfloor c^2_i-c^1_i\rfloor.\]
Then
\[\bar c^2_i-c^1_i\in [0,1),\]
that is, $\bar c^2_i-c^1\in Y$. Since $c^1_i-c^2_i\in[0,1]$, we have
\[-1\leq \lfloor c^2_i-c^1_i\rfloor \leq 0.\]
By \eqref{C0}, we have
\[v_i(\tau;c^1)-v_i(\tau;\bar c^2)\leq C_0,\]
which implies
\[v_i(\tau;c^1)-v_i(\tau;c^2)\leq -\lfloor c^2_i-c^1_i\rfloor+C_0\leq 1+C_0.\]
\end{proof}

\begin{lemma}\label{lem1n}
For all $\sigma,l,t>0$ and for all $c\in\R^n$, we have
\[v_i((\sigma+l)t;(\sigma+l)c)\leq v_i(\sigma t;\sigma c)+v_i(lt;lc)+2(\|f_i\|_\infty+C_0+1).\]
\end{lemma}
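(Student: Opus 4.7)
\bigskip

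\noindent\textbf{Proof proposal for Lemma \ref{lem1n}.}
The plan is to mimic the proof of Lemma \ref{lem1}, replacing the scalar comparison principle (which is unavailable in dimension $n>1$) with the quantitative bound provided by condition (f5) together with Lemma \ref{lem0n}. The extra cost of each application of (f5) (namely $C_0$ instead of $0$) accounts precisely for the new term $2C_0$ appearing in the right-hand side, when compared with Lemma \ref{lem1}.

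First, I pick $n_1\in\mathbb Z^n$ coordinatewise so that $n_1-\sigma c\in(0,1]^n\subset Y$, i.e., $(\sigma+l)c< lc+n_1\leq (\sigma+l)c+\mathbf{1}$. Setting $c^1:=lc+n_1$ and $c^2:=(\sigma+l)c$, one has $c^1-c^2\in Y$, so by (f5),
\[
v_i(lt;(\sigma+l)c)-v_i(lt;lc+n_1)\leq C_0,\qquad i=1,\dots,n.
\]
Using the $\Z^n$-periodicity of $f$ in $r$, $v(lt;lc+n_1)=v(lt;lc)+n_1$, hence
\begin{equation}\label{stepAA}
v_i(lt;(\sigma+l)c)-(\sigma+l)c_i\;\leq\; v_i(lt;lc)-lc_i+1+C_0,
\end{equation}
which is the $n$-dimensional analogue of inequality \eqref{1} in the proof of Lemma \ref{lem1}.

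Next, I choose $n_2\in\Z^n$ coordinatewise so that $\sigma c+n_2-v(\lfloor lt\rfloor+1;(\sigma+l)c)\in(0,1]^n\subset Y$, and define, for $s\geq 0$,
\[
w(s+\lfloor lt\rfloor+1):=v(s;\sigma c)+n_2.
\]
A direct computation using the full $\Z^{n+1}$-periodicity of $f$ in $(r,\tau)$ shows that $w$ satisfies the same ODE $\dot w(\cdot)=f(w(\cdot),\cdot)$ on the shifted interval; in particular $w((\sigma+l)t)=v(s_\star;\sigma c)+n_2$ for $s_\star:=(\sigma+l)t-\lfloor lt\rfloor-1\leq \sigma t$. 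By the time-shift identity coming from the $\tau$-periodicity of $f$,
\[
v((\sigma+l)t;(\sigma+l)c)\;=\;v\bigl(s_\star;v(\lfloor lt\rfloor+1;(\sigma+l)c)\bigr),
\]
so applying (f5) with $c^1:=\sigma c+n_2$, $c^2:=v(\lfloor lt\rfloor+1;(\sigma+l)c)$ (which satisfies $c^1-c^2\in Y$ by the choice of $n_2$) gives
\[
v_i((\sigma+l)t;(\sigma+l)c)-w_i((\sigma+l)t)\;\leq\; C_0.
\]

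Finally, I combine the previous steps. Using $|\dot v_i|\leq \|f_i\|_\infty$, I estimate
\[
v_i(s_\star;\sigma c)-v_i(\sigma t;\sigma c)\;\leq\;\|f_i\|_\infty,\qquad v_i(\lfloor lt\rfloor+1;(\sigma+l)c)-v_i(lt;(\sigma+l)c)\;\leq\;\|f_i\|_\infty,
\]
and by the choice of $n_2$, $n_{2,i}\leq v_i(\lfloor lt\rfloor+1;(\sigma+l)c)-\sigma c_i+1$. Substituting and then using \eqref{stepAA} to control $v_i(lt;(\sigma+l)c)-(\sigma+l)c_i$, I obtain
\[
v_i((\sigma+l)t;(\sigma+l)c)-(\sigma+l)c_i\;\leq\;\bigl(v_i(\sigma t;\sigma c)-\sigma c_i\bigr)+\bigl(v_i(lt;lc)-lc_i\bigr)+2\|f_i\|_\infty+2C_0+2,
\]
which rearranges to the desired inequality.

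The only real obstacle is a bookkeeping one: at each of the two places where Lemma \ref{lem1} invokes the scalar comparison principle, I must verify that the two initial points $c^1,c^2$ actually satisfy $c^1-c^2\in Y$ coordinatewise, so that (f5) is applicable. The coordinatewise choice of $n_1$ and $n_2$ above is arranged precisely for this purpose; the rest is arithmetic that exactly parallels the one-dimensional argument, with $C_0$ replacing the zero that is implicit in the scalar comparison principle.
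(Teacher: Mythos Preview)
Your proof is correct and follows essentially the same approach as the paper's: the paper likewise chooses integer shifts $n^1,n^2\in\Z^n$ so that the relevant initial data differ by an element of $Y$, applies condition (f5) twice in place of the scalar comparison principle, and combines with the two $\|f_i\|_\infty$ bounds coming from the time mismatches $\lfloor lt\rfloor+1-lt$ and $\sigma t-s_\star$. The only cosmetic difference is bookkeeping---the paper organizes the estimate as four displayed inequalities (analogous to \eqref{1}--\eqref{4}) and sums them, whereas you substitute directly; also, your mention of Lemma~\ref{lem0n} in the plan is superfluous, since your argument only ever invokes (f5) in the direction stated.
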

\begin{proof}
We first take $n^1\in\mathbb Z^n$ such that
\[lc+n^1-(\sigma+l)c\in Y.\]
By \eqref{C0}, it follows that
\[v_i(s;(\sigma+l)c)-(v_i(s;lc)+n^1_i)\leq C_0.\]
Thus, 
\begin{equation}\label{1n}
v_i(lt;(\sigma+l)c)-(\sigma+l)c_i\leq v_i(lt;lc)+n^1_i+C_0-(lc_i+n^1_i)+1.
\end{equation}
Next, we take $n^2\in\mathbb Z^n$ such that
\[\sigma c+n^2-v(\lfloor lt\rfloor+1;(\sigma+l)c)\in Y.\]
Define
\[\tilde w(s)=w(s+\lfloor lt\rfloor +1):=v(s;\sigma c)+n^2,\quad s\in[0,\sigma t].\]
One can easily check that for $s\in[0,\sigma t]$, 
\[\tilde w(s)=v(s;\sigma c+n^2),\quad \tilde v(s):=v(s+\lfloor lt \rfloor+1;(\sigma+l)c)=v(s;v(\lfloor lt \rfloor+1;(\sigma+l)c)).\]
By applying \eqref{C0} to $\tilde w(s)$ and $\tilde v(s)$, we get 
\[v_i((\sigma +l)t;(\sigma+l)c)\leq w_i((\sigma+l)t)+C_0,\]
 which implies
\begin{equation}\label{2n}
v_i((\sigma +l)t;(\sigma+l)c)-v_i(\lfloor lt\rfloor +1;(\sigma+l)c)\leq w_i((\sigma+l) t)+C_0-(\sigma c_i+n^2_i)+1.
\end{equation}
Additionally, we have
\begin{equation}\label{3n}
v_i(\lfloor lt\rfloor +1;(\sigma+l)c)-v_i(lt;(\sigma+l)c)\leq \|f_i\|_\infty.
\end{equation}
and
\begin{equation}\label{4n}
-\|f_i\|_\infty\leq w_i(\lfloor lt\rfloor +1+\sigma t)-w_i((\sigma+l)t).
\end{equation}
Adding inequalities \eqref{1n}-\eqref{4n}, we conclude
\begin{align*}
&v_i((\sigma +l)t;(\sigma+l)c)-(\sigma+l)c_i
\\ &\leq w_i(\lfloor lt\rfloor +1+\sigma t)-(\sigma c_i+n^2_i)+v_i(lt;lc)-lc_i+2(\|f_i\|_\infty+C_0+1)
\\ &=v_i(\sigma t;\sigma c)-\sigma c_i+v_i(lt;lc)-lc_i+2(\|f_i\|_\infty+C_0+1),
\end{align*}
which completes the proof.
\end{proof}

\begin{lemma}
For each $i = 1,\dots,n$, the following limit exists
\[\overline{u}_i(t;c):=\lim_{\ep\to 0}u^\ep(t;c)=\lim_{\ep\to 0}\ep v_i\Big(\frac{t}{\ep};\frac{c}{\ep}\Big)=\inf_{k>0}\frac{v_i(kt;kc)}{k}.\]
Moreover, for all $c\in\R^n$, we have
\begin{equation}\label{rotatn}
\lim_{k\to\infty}\frac{v_i(k;c)}{k}=\bar f_i,
\end{equation}
where
\begin{equation}\label{barf=n}
\bar f_i:=\overline{u}_i(1;0)=\lim_{k\to\infty}\frac{v_i(k;0)}{k}=\inf_{k>0}\frac{v_i(k;0)}{k}.
\end{equation}
\end{lemma}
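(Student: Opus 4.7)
The plan is to transplant the scalar arguments of Lemmas \ref{lem2} and \ref{lem:barf} into the higher-dimensional setting, with the componentwise subadditivity established in Lemma \ref{lem1n} playing the role of Lemma \ref{lem1}, and the uniform bounded-comparison estimate \eqref{bdtn} compensating for the loss of the trivial translation identity $v(\tau;c+1)=v(\tau;c)+1$ available in the scalar case.

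First, for each fixed $t>0$, $c\in\R^n$, and index $i$, I would set
\[
\phi_i(k):=v_i(kt;kc)+C_i,\qquad C_i:=2(\|f_i\|_\infty+C_0+1).
\]
Lemma \ref{lem1n} yields $\phi_i(\sigma+l)\leq\phi_i(\sigma)+\phi_i(l)$ for all $\sigma,l>0$, and the map $k\mapsto v_i(kt;kc)$ is continuous and locally bounded (using $|v_i(s;c')|\leq|c'_i|+\|f_i\|_\infty s$). The Fekete-type chain of inequalities from the proof of Lemma \ref{lem2} then transfers verbatim, giving
\[
\lim_{k\to\infty}\frac{\phi_i(k)}{k}=\inf_{k>0}\frac{\phi_i(k)}{k}.
\]
Since $C_i/k\to 0$, this proves the existence of $\overline{u}_i(t;c)$ and the infimum formula $\overline{u}_i(t;c)=\inf_{k>0}v_i(kt;kc)/k$. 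Specializing to $c=0$ and setting $\bar f_i:=\overline{u}_i(1;0)$, the change of variable $s=kt$ in the infimum yields $\overline{u}_i(t;0)=t\bar f_i$.

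Next, I would absorb the initial datum $c$ by upgrading the one-sided assumption (f5) and its consequence \eqref{bdtn} to a uniform two-sided bound. Given arbitrary $c,c'\in\R^n$, decompose $c-c'=m+r$ with $m_j=\lfloor c_j-c'_j\rfloor\in\Z$ and $r\in Y$. The $\Z^n$-periodicity of $f$ in the fast spatial variable gives $v(\tau;c'+m+r)=v(\tau;c'+r)+m$; applying (f5) and \eqref{bdtn} to the pair $(c'+r,c')$, whose difference lies in $Y$, then produces
\[
\bigl|v_i(\tau;c)-v_i(\tau;c')-(c_i-c'_i)\bigr|\leq 1+C_0\qquad\text{uniformly in }\tau>0.
\]
Setting $c'=0$, replacing $\tau$ by $t/\ep$ and $c$ by $c/\ep$, multiplying by $\ep$, and sending $\ep\to 0$ gives $\overline{u}_i(t;c)=\overline{u}_i(t;0)+c_i=\bar f_i t+c_i$, which yields the first chain of equalities. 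For \eqref{rotatn}, I would take $\tau=k$ in the same uniform bound, divide by $k$, and send $k\to\infty$; this immediately gives $\lim_{k\to\infty}v_i(k;c)/k=\lim_{k\to\infty}v_i(k;0)/k=\bar f_i$. The identities in \eqref{barf=n} then follow by specializing to $c=0$ and combining with the infimum formula obtained above.

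The only step requiring any real care is the derivation of the two-sided uniform bound from the one-sided assumption (f5): one must reduce a general pair $(c,c')$ to one lying in $Y$ via a $\Z^n$-translation and then apply both the upper bound \eqref{bdtn} from Lemma \ref{lem0n} and (f5) itself. Once this bookkeeping is in place, the remainder is a direct transcription of Lemmas \ref{lem2} and \ref{lem:barf}, and no new dynamical input beyond (f5) is needed.
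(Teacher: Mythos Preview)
Your proposal is correct and follows essentially the same route as the paper: the existence of the limit and the infimum formula are obtained by applying the Fekete argument of Lemma~\ref{lem2} to $\phi_i(k)=v_i(kt;kc)+2(\|f_i\|_\infty+C_0+1)$ using the componentwise subadditivity of Lemma~\ref{lem1n}, and the $c$-independence in \eqref{rotatn} is deduced by reducing to an integer translate in $Y$ and invoking the two-sided bound coming from (f5) together with \eqref{bdtn}. The paper carries out exactly this argument, just slightly more tersely (it writes the sandwich $v_i(k;0)+n_i-C_0\leq v_i(k;c)\leq v_i(k;0)+n_i+1+C_0$ directly for $n\in\Z^n$ with $c-n\in Y$, rather than first packaging it as your general two-sided estimate $|v_i(\tau;c)-v_i(\tau;c')-(c_i-c'_i)|\leq 1+C_0$); your additional derivation of $\overline{u}_i(t;c)=\bar f_i t+c_i$ is not part of this lemma's statement but does no harm.
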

\begin{proof}
Similar to Lemma \ref{lem2}, the limit $\lim_{\ep\to 0}u^\ep(t;c)$ exists. For each $c\in\R^n$, we take $n\in\mathbb Z^n$ such that $c-n\in Y$, then
\[v_i(k;0)+n_i=v_i(k;n)\leq v_i(k;c)+C_0,\]
and
\[v_i(k;c)\leq v_i(k;n)+1+C_0=v_i(k;0)+n_i+1+C_0.\]
Dividing both inequalities by $k$ and letting $k \to \infty$, we conclude \eqref{rotatn}.
\end{proof}

\medskip

\noindent {\it Proof of Theorem \ref{thmequ}.}
By Lemma \ref{lem0n}, we know that \eqref{C0} implies \eqref{bdtn}.

For $\tau>0$ and for each $i=1,\dots,n$, we define
\[M^i_\tau:=\sup_{c\in\R^n}(v_i(\tau;c)-c_i),\quad m^i_\tau:=\inf_{c\in\R^n}(v_i(\tau;c)-c_i).\]
Thanks to the periodicity of $f$, we have for all $n \in \mathbb{Z}^n$,
\[v_i(\tau;c+n)-(c_i+n_i)=v_i(\tau;c)+n_i-(c_i+n_i)=v_i(\tau;c)-c_i,\]
which implies that it suffices to consider $c$ in a fundamental domain $I := c^0 + Y$ for some fixed $c^0 \in \mathbb{R}^n$. Thus,
\[M^i_\tau=\sup_{c\in I}(v_i(\tau;c)-c_i),\quad m^i_\tau=\inf_{c\in I}(v_i(\tau;c)-c_i).\]
Since for each $i=1,\dots,n$, the map $c\mapsto v_i(\tau;c)-c_i$ is continuous for all $\tau>0$, the above supremum and infimum can be achieved. Let $c^1, c^2 \in \mathbb{R}^n$ with $c^1-c^2\in Y$, $v_i(\tau;c^2)-c^2=M^i_\tau$ and $v_i(\tau;c^1)-c^1=m_\tau$. By \eqref{C0}, we have
\[v_i(\tau;c^2)-c^2_i-1\leq v_i(\tau;c^1)-c^1_i+C_0.\]
Hence, $M_\tau-m_\tau\leq 1+C_0$.

Note that for $\tau,j\in\mathbb N$, we have $v_i(\tau;v(j\tau;c))=v_i((j+1)\tau;c)$. Then by using \eqref{rotatn} and $M_\tau-m_\tau \leq 1+C_0$, for $m\in\mathbb N$, we get
\begin{equation}\label{1+C0}
\begin{aligned}
|v_i(\tau;c)-(c_i+\bar f_i\tau)|&=\bigg|v_i(\tau;c)-c_i-\lim_{m\to \infty}\frac{v_i(m\tau;c)-c_i}{m}\bigg|
\\ &=\bigg|v_i(\tau;c)-c_i-\lim_{m\to \infty}\frac{1}{m}\sum_{j=0}^{m-1}\Big(v_i(\tau;v(j\tau;c))-v_i(j\tau;c)\Big)\bigg|
\\ &\leq 1+C_0.
\end{aligned}
\end{equation}
If $f_i(r,\tau)$ is independent of $\tau$, the above inequality holds for all $\tau>0$, and the proof is complete in this case. If not, since $\bar f_i=\lim\limits_{\ep\to 0}u^\ep_i(1;0)$, by \eqref{uen} we have $|\bar f_i|\leq \|f_i\|_\infty$. Then we estimate for general $\tau > 0$, 
\begin{align*}
|v_i(\tau;c)-(c_i+\bar f_i\tau)|&=|v_i(\tau;c)-v_i(\lfloor \tau \rfloor;c)|+|v_i(\lfloor \tau \rfloor;c)-(c_i+\bar f_i\lfloor \tau \rfloor)|+|\bar f_i(\lfloor \tau \rfloor-\tau)|
\\ &\leq 1+C_0+2\|f_i\|_\infty.
\end{align*}
This completes the proof of the uniform bound. By combining with \eqref{rotatn} and \eqref{barf=n}, we obtain \eqref{infvn}.
\qed

\bigskip

\noindent {\it Proof of Theorem \ref{thm1n}.} By Theorem \ref{thmequ}, for all $t,\ep>0$ and $c\in\R^n$, we have
\[\bigg|v_i\Big(\frac{t}{\ep};\frac{c}{\ep}\Big)-\Big(\frac{c_i}{\ep}+\frac{\bar f_i t}{\ep}\Big)\bigg|\leq 1+C_0+2\|f_i\|_\infty.\]
Multiplying both sides by $\varepsilon$ yields
\[|u^\ep_i(t;c)-(c_i+\bar f_i t)|=\bigg|\ep v_i\Big(\frac{t}{\ep};\frac{c}{\ep}\Big)-(c_i+\bar f_i t)\bigg|\leq (1+C_0+2\|f_i\|_\infty)\ep.\]
If $f_i(r,\tau)$ is independent of $\tau$, the same estimate holds with a smaller constant as in Theorem~\ref{thmequ}. This completes the proof.
\qed

\begin{prop}\label{prop:shear}
Let $n\geq 2$, and let $\xi\in\R^n$ be diophantine, with $C_\xi,\sigma>0$ such that
\[|\xi \cdot k|\geq C_\xi|k|^{-(n+\sigma)}\,\quad \text{for all }k\in\Z^n\backslash\{0\}.\]
Assume $G\in C^m(\R^n)$ is $\Z^n$-periodic, $m>2n+\sigma$ and $G>0$. Let $u^\ep(t;c)$ and $v(\tau;c)$ be the unique solution of
\begin{equation*}
\begin{cases}
\dot u^\ep=\frac{\xi}{G(\frac{u^\ep}{\ep})},\quad t>0,\\ u^\ep(0)=c\in\R^n,
\end{cases}
\end{equation*}
and
\begin{equation*}
\begin{cases}
\dot v=\frac{\xi}{G(v)},\quad \tau>0,\\ v(0)=c\in\R^n,
\end{cases}
\end{equation*}
respectively. Then condition {\rm (f5)} is satisfied for the irrational shear flow $v(\tau;c)$. As a consequence, there is a constant $C>0$ and a vector $\bar f\in\R^n$ depending only on $G$ and $\xi$ such that for all $t>0$ and all $c\in\R^n$,
\[|u^\ep_i(t;c)-(c_i+\bar f_i t)|\leq C\ep.\]
\end{prop}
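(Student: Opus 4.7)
The plan is to reduce the proposition to an application of Theorem \ref{thm1n} by verifying assumption (f5) for the shear flow $v(\tau;c)$. The starting observation is structural: since $\dot v = \xi/G(v)$ is at every moment parallel to the fixed vector $\xi$, the orbit is a half-line. I would write $v(\tau;c) = c + \lambda(\tau;c)\,\xi$ for a scalar amplitude $\lambda(\tau;c)\geq 0$ satisfying $\dot\lambda = 1/G(c+\lambda\xi)$ with $\lambda(0;c)=0$, and then separate variables to obtain the implicit identity
\[
\tau = \int_0^{\lambda(\tau;c)} G(c+s\xi)\, ds.
\]
In these coordinates, verifying (f5) reduces to showing that $\lambda(\tau;c)$ is asymptotically linear in $\tau$ with a slope independent of $c$.

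For that linearization I would invoke the same Diophantine ergodic machinery used in the proof of Theorem \ref{qp}. Expanding $G$ in a Fourier series, integrating termwise, and applying the Diophantine bound $|k\cdot\xi|\geq C_\xi |k|^{-(n+\sigma)}$, convergence of the remainder reduces to summability of $|\widehat G(k)|\cdot |k|^{n+\sigma}$; by Cauchy--Schwarz this follows from $G\in H^s$ for some $s>n/2+(n+\sigma)$, which the hypothesis $m>2n+\sigma$ allows me to choose in the nonempty interval $(n/2+(n+\sigma),\, m-n/2)$. Because the shift $c$ enters only through the phase $e^{2\pi i k\cdot c}$, the resulting bound is uniform in $c$: there is a constant $C_1 = C_1(n,\xi,G)$ such that
\[
\bigg| \int_0^\Lambda G(c+s\xi)\, ds - \Lambda\,\bar G \bigg| \leq C_1 \quad \text{for all } \Lambda>0,\ c\in\R^n,
\]
where $\bar G:=\int_{\T^n} G(\eta)\, d\eta>0$. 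Substituting $\Lambda=\lambda(\tau;c)$ and dividing by $\bar G$ then gives $|\lambda(\tau;c)-\tau/\bar G|\leq C_2:=C_1/\bar G$, uniformly in $(\tau,c)$.

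Finally, for any $c^1,c^2\in\R^n$ with $c^1-c^2\in Y$, the triangle inequality produces $|\lambda(\tau;c^1)-\lambda(\tau;c^2)|\leq 2C_2$, so componentwise
\[
\big|v_i(\tau;c^1)-v_i(\tau;c^2)\big| \leq |c^1_i-c^2_i| + 2C_2|\xi_i| \leq 1 + 2C_2\max_{1\leq i\leq n}|\xi_i| =: C_0,
\]
which verifies (f5); meanwhile (f4) is immediate since $G$ is continuous and strictly positive on the compact torus $\T^n$, making $\xi/G(\cdot)$ bounded, Lipschitz, and $\Z^n$-periodic. Applying Theorem \ref{thm1n} then yields the desired $O(\ep)$ convergence rate. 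The main obstacle I anticipate is establishing the uniform-in-$c$ character of the ergodic bound: one must be careful that the Fourier-side constant depends only on $(n,\xi,G)$ and not on the base point $c$ or the integration length $\Lambda$, which is exactly where the compatibility between the Diophantine exponent $n+\sigma$ and the smoothness threshold $m>2n+\sigma$ becomes sharp.
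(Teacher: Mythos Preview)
Your argument is correct and, at the Fourier level, essentially coincides with the paper's: both reduce to bounding $\sum_{k\neq 0}|\widehat G(k)|/|k\cdot\xi|$ via the Diophantine condition and the regularity threshold $m>2n+\sigma$. The packaging differs in a way worth noting. The paper invokes a result of Kozlov to produce a global change of variables $w_i=v_i+\tfrac{\xi_i}{M_G}\theta(v)$, where $\theta$ solves the cohomological equation $D\theta\cdot\xi=G-M_G$; boundedness of $\theta$ (same Fourier sum) conjugates $v$ to the linear flow $\dot w=\xi/M_G$, from which the componentwise bound follows. You instead exploit directly that the orbit is a line, write $v=c+\lambda\xi$, and estimate the integral $\int_0^\Lambda G(c+s\xi)\,ds$; this is exactly $\theta(c+\Lambda\xi)-\theta(c)+\Lambda M_G$, so your $C_1$ is $2\|\theta\|_\infty$. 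Your route is more self-contained and makes the uniformity in $c$ transparent (the base point enters only as a phase $e^{2\pi i k\cdot c}$), while the paper's conjugacy viewpoint connects the computation to the classical linearization of torus flows. Both land on (f5) and then apply Theorem~\ref{thm1n}.
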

\begin{proof}
We verify that the rescaled solution $v(\tau;c)$ satisfies condition \eqref{C0}. According to \cite{K}, there is a map $\Phi:v\mapsto w$ defined by
\[w_i=v_i+\frac{\xi_i}{M_G}\theta(v),\]
which maps $v(\tau;c)$ to the linear flow $w(\tau;c)$ solving $\dot w_i=\frac{\xi_i}{M_G}$. Here, $M_G=\int_{\T^n}G(z)\, dz$, and $\theta$ is determined from $D\theta\cdot \xi=G(v)-M_G$. From Fourier's expansion we have
\[\theta(v)=\sum_{k\in\Z^n\backslash\{0\}}\frac{G_k}{2\pi i k\cdot \xi}e^{2\pi i k\cdot v},\]
where $G_k$ is the $k$-th Fourier coefficient of $G-M_G$. Since $G\in C^m(\R^n)$ is $\Z^n$-periodic, we have $|G_k|\sim |k|^{-m}$. We recall that $m>2n+\sigma$ and $|\xi \cdot k|\geq C_\xi|k|^{-(n+\sigma)}$. Hence, $\theta$ is bounded by some $C>0$. Since $w$ is a linear flow, for all $c^1,c^2\in \R^n$ with $c^1-c^2\in Y$, it is clear that $|w_i(\tau;c^1)-w_i(\tau;c^2)|\leq 1$ for all $\tau>0$. Thus,
\[v_i(\tau;c^2)-v_i(\tau;c^1)=w_i(\tau;c^2)-w_i(\tau;c^1)-\frac{\xi_i}{M_G}\theta(v(\tau;c^2))+\frac{\xi_i}{M_G}\theta(v(\tau;c^1)),\]
which implies $v_i(\tau;c^2)-v_i(\tau;c^1)\leq 1+\frac{2C|\xi_i|}{M_G}$, that is, \eqref{C0} holds. We are thus in a position to apply Theorem \ref{thm1n}, which yields the desired estimate.
\end{proof}

\bigskip

\noindent {\it Proof of Theorem \ref{thmwc}}.
We define
\begin{equation*}
\begin{cases}
z^\ep(t)=a_2u^\ep_1(t)+a_1u^\ep_2(t), 
\\ d^\ep(t)=u^\ep_1(t)-u^\ep_2(t).
\end{cases}
\end{equation*}
Then the system becomes
\begin{equation*}
\begin{cases}
\dot z^\ep=a_2f_1(\frac{t}{\ep})+a_1f_2(\frac{t}{\ep}), 
\\ \dot d^\ep=-\frac{a_1+a_2}{\ep}d^\ep+f_1(\frac{t}{\ep})-f_2(\frac{t}{\ep}).
\end{cases}
\end{equation*}
The equation for $d^\varepsilon(t)$ is linear and has the explicit solution
\[d^\ep(t)=(c_1-c_2)e^{-\frac{(a_1+a_2)t}{\ep}}+\int_0^t e^{-\frac{(a_1+a_2)(t - s)}{\ep}}\Big(f_1\Big(\frac{s}{\ep}\Big)-f_2\Big(\frac{s}{\ep}\Big)\Big)\, ds.\]
Using boundedness of $f_1$ and $f_2$, one shows that
\[\bigg|\int_0^t e^{-\frac{(a_1+a_2)(t - s)}{\ep}}\Big(f_1\Big(\frac{s}{\ep}\Big)-f_2\Big(\frac{s}{\ep}\Big)\Big)\, ds\bigg|\leq \frac{\|f_1\|_\infty+\|f_2\|_\infty}{a_1+a_2}\ep.\]
We integrate the equation for $z^\ep(t)$ to get
\[z^\ep(t)=c_1a_2+c_2a_1+\int_0^t \Big(a_2f_1\Big(\frac{s}{\ep}\Big)+a_1f_2\Big(\frac{s}{\ep}\Big)\Big)\, ds.\]
Setting $\tau=s/\ep$, we get
\[\int_0^t f_i\Big(\frac{s}{\ep}\Big)\, ds=\ep\int_0^{t/\ep}f_i(\tau)\, d\tau.\]
Note that for all $t>0$,
\[\bigg|\int_0^t f_i(s)\, ds-\bar f_i t\bigg|=\bigg|\int_{\lfloor t\rfloor}^tf_i(s)\, ds-\bar f_i (t-\lfloor t\rfloor)\bigg|\leq 2\|f_i\|_\infty.\]
Thus,
\begin{align*}
&\bigg|\int_0^t \Big(a_2f_1\Big(\frac{s}{\ep}\Big)+a_1f_2\Big(\frac{s}{\ep}\Big)\Big)\, ds-(a_2\bar f_1+a_1\bar f_2)t\bigg|
\\ &=\bigg|\int_0^{t/\ep} (a_2f_1(\tau)+a_1f_2(\tau))\, d\tau-(a_2\bar f_1+a_1\bar f_2)\frac{t}{\ep}\bigg|\ep\leq 2(a_2\|f_1\|_\infty+a_1\|f_2\|_\infty)\ep.
\end{align*}
We now come back to the original solution
\begin{equation*}
\begin{cases}
u^\ep_1(t)=\frac{z^\ep(t)+a_1d^\ep(t)}{a_1+a_2},
\\ u^\ep_2(t)=\frac{z^\ep(t)-a_2d^\ep(t)}{a_1+a_2}.
\end{cases}
\end{equation*}
Therefore,
\[|u^\ep_i-m_i(t)|\leq \bigg(\frac{2a_2(a_1+a_2)+a_i}{(a_1+a_2)^2}\|f_1\|_\infty+\frac{2a_1(a_1+a_2)+a_i}{(a_1+a_2)^2}\|f_2\|_\infty\bigg)\ep,\quad i=1,2,\]
which completes the proof.
\qed

\section{Applications}\label{Sec4}

\subsection{Proof of Theorem \ref{thmt}}\label{4.1}

Let $u^\ep(t;c)$ be the unique solution of \eqref{uen}. By the method of characteristics, it is well-known that the weak solution of \eqref{transeq} is given by
\[V^\ep(x,t)=\varphi(x_0),\]
where $x_0\in\R^n$ is the unique vector such that $u^\ep(t;x_0)=x$. For each $t>0$, there exist $n\in\mathbb N$ and $l\in[0,\ep)$ such that $t=n\ep+l$. Define $z^\ep(s):=u^\ep(t-s;x_0)$ for $s\in[0,t]$. Then one can check that $z^\ep(s)$ satisfies
\begin{equation*}
\begin{cases}
\dot z^\ep(s)=-f(\frac{z^\varepsilon(s)}{\varepsilon},\frac{t-s}{\ep})=-f(\frac{z^\varepsilon(s)}{\varepsilon},\frac{l-s}{\ep}),
\quad s\in[0,t],
\\ z^\varepsilon(0)=x,\quad z^\ep(t)=x_0.
\end{cases}
\end{equation*}
Define $z^{\ep,l}(s):=z^\ep(s+l)$, then
\begin{equation*}
\begin{cases}
\dot z^{\ep,l}(s)=\dot z^\ep(s+l)=-f(\frac{z^\varepsilon(s+l)}{\varepsilon},\frac{l-(s+l)}{\ep})=-f(\frac{z^{\ep,l}(s)}{\varepsilon},-\frac{s}{\ep}),
\quad s\in[-l,t-l],
\\ z^{\ep,l}(-l)=x,\quad z^{\ep,l}(t-l)=x_0.
\end{cases}
\end{equation*}
Note that $\frac{t-l}{\ep}=n\in\N$, so we may apply the estimate \eqref{1+C0} (after rescaling) to obtain that there is a  vector $\bar \xi\in\R^n$ such that
\[|z^{\ep,l}_i(t-l)-(z^{\ep,l}_i(0)-\bar \xi_i(t-l))|\leq (1+C_0)\ep.\]
Moreover, we have $|\bar \xi_i|\leq \|f_i\|_\infty$, since $\|-f_i(r,-\tau)\|_\infty=\|f_i(r,\tau)\|_\infty$. Then, recall that $l\in[0,\ep)$, we have
\begin{align*}
|z^\ep_i(t)-(x_i-\bar \xi_i t)|&\leq |z^{\ep,l}_i(t-l)-(z^{\ep,l}_i(0)-\bar \xi_i(t-l))|+|z^{\ep,l}_i(0)-x_i|+|\bar \xi_i l|
\\ &\leq (1+C_0+2\|f_i\|_\infty)\ep.
\end{align*}
Thus, using the regularity of $\varphi$, we conclude
\begin{align*}
|V^\ep(x,t)-\varphi(x-\bar \xi t)|&=|\varphi(z^\ep(t))-\varphi(x-\bar \xi t)|\leq \|D\varphi\|_\infty \|z^\ep(t)-(x-\bar \xi t)\|
\\ &\leq \|D\varphi\|_\infty \Big(1+C_0+2\max_{1\leq i\leq n}\|f_i\|_\infty\Big)\ep.
\end{align*}
This proves part (1). Part (2) follows in a similar way by applying Theorem \ref{thm1} or \ref{qp}, and we omit the details for brevity.

\subsection{Further discussions}\label{4.2}

We first explain how our main results apply to PDEs. Consider the Hamilton--Jacobi equation
\begin{equation}\label{HJE}
\begin{cases}
U^\ep_t+H(\frac{x}{\ep},\frac{U^\ep}{\ep},DU^\ep)=0,\quad &\text{for }x\in\R^n,\ t>0,
\\ U^\ep(x,0)=\varphi(x)\in {\rm Lip}(\R^n),\quad &\text{for }x\in\R^n,
\end{cases}
\end{equation}
where $D$ stands for the derivative with respect to $x$, Lip$(\R^n)$ stands for the set of all globally Lipschitz continuous functions on $\R^n$, and the Hamiltonian $H(y,r,p)$ is continuous, periodic in the first and second variable, and globally Lipschitz continuous in the second variable. The above equation arises in the study of dislocation dynamics in crystals, see \cite[Appendix B]{MNT} and \cite[Section 2]{IMR}. In particular, if $H(y,r,p)$ is superlinear and convex in $p$, \cite{MNT} establishes the optimal convergence rate $O(\ep)$ of $U^\ep$. Let $f$ be Lipschitz continuous and 1-periodic. We denote by $u^\ep(t;c)$ the unique solution of
\begin{equation*}
\begin{cases}
\dot u^\ep=f(\frac{u^\ep}{\ep}),\quad t>0,\\ u^\ep(0)=c\in\R,
\end{cases}
\end{equation*}
Define $U^\ep(x,t):=u^\ep(t;c)$ for all $x\in\R^n$. It is straightforward to verify that $U^\ep(x,t)$ solves the following Hamilton--Jacobi equation
\begin{equation*}
\begin{cases}
U^\ep_t+H(\frac{U^\ep}{\ep},DU^\ep)=0,\quad &\text{for }x\in\R^n,\ t>0,
\\ U^\ep(x,0)=c\in\R,\quad &\text{for }x\in\R^n,
\end{cases}
\end{equation*}
where the Hamiltonian $H:\R\times\R^n\to\R$ satisfies $H(r,0)=-f(r)$. As noted in \cite[Section 2.2]{IMR}, the case with constant initial data $U^\ep(x,0)=c\in\R$ corresponds to a constant initial plastic strain, and the case where the equation is independent of $\frac{x}{\ep}$ corresponds to constant (i.e., spatially homogeneous) obstacles. In this case, \eqref{1ep} shows the sharp estimate \[|U^\ep(x,t)-(c+\bar ft)|\leq \ep,\quad \text{for all }x\in\R^n,\ t>0,\]
where $\bar f=-\ol{H}(0)$, and $\ol{H}$ is the effective Hamiltonian, see \cite{IM08}. Moreover, if we assume (f3), Theorem \ref{qp} may have potential applications to the homogenization of Hamilton--Jacobi equations arising in dislocation dynamics in quasi-crystals, where quasi-periodic structures naturally appear. Similarly, it is clear that the function $U^\ep(x,t)$ also solves the following semilinear heat equation
\begin{equation*}
\begin{cases}
U^\ep_t-\ep^\alpha\Delta U^\ep-f(\frac{U^\ep}{\ep})=0,\quad \text{for }x\in\R^n,\ t>0,
\\ U^\ep(x,0)=c\in\R,
\end{cases}
\end{equation*}
which was studied in \cite{CDN}. Again, the case of constant initial data corresponds to an initially uniform temperature distribution.

Let $a_1,a_2>0$. Consider the following weakly coupled system
\begin{equation}\label{wchx}
\begin{cases}
(U^\ep_1)_t+H_1(\frac{x}{\ep},\frac{t}{\ep},DU^\ep_1)+\frac{a_1}{\ep}(U^\ep_1-U^\ep_2)=0, &\quad\text{for }x\in\R^n,\  t>0,
\\ (U^\ep_2)_t+H_2(\frac{x}{\ep},\frac{t}{\ep},DU^\ep_2)+\frac{a_2}{\ep}(U^\ep_2-U^\ep_1)=0, &\quad\text{for }x\in\R^n,\  t>0,
\\ U^\ep_i(x,0)=\varphi_i(x)\in {\rm Lip}(\R^n), &\quad \text{for }x\in\R^n,\ i=1,2.
\end{cases}
\end{equation}
The above system is weakly coupled in the sense that the $i$-th equation depends on $DU^\ep$ only through $DU^\ep_i$, not on $DU^\ep_j$ for $j\neq i$. This problem was studied in \cite{MT}, and arises naturally in optimal control theory with random switching. By adapting the method in \cite{CDI}, the authors were able to give the convergence rate $O(\ep^{1/3})$ for the viscosity solution $(U^\ep_1,U^\ep_2)$ of \eqref{wchx}. Let $(u^\ep_1(t;c),u^\ep_2(t;c))$ be the unique solution of \eqref{wc}. We can apply Theorem \ref{thmwc} to the following weakly coupled system
\begin{equation}\label{wch}
\begin{cases}
(U^\ep_1)_t+H_1(\frac{t}{\ep},DU^\ep_1)+\frac{a_1}{\ep}(U^\ep_1-U^\ep_2)=0, &\quad\text{for }x\in\R^n,\  t>0,
\\ (U^\ep_2)_t+H_2(\frac{t}{\ep},DU^\ep_2)+\frac{a_2}{\ep}(U^\ep_2-U^\ep_1)=0, &\quad\text{for }x\in\R^n, \ t>0,
\\ U^\ep_i(x,0)=c_i\in\R, &\quad \text{for }x\in\R^n,\ i=1,2,
\end{cases}
\end{equation}
where $H_i(\tau,0)=-f_i(\tau)$ for $i=1,2$. Define $(U^\ep_1(x,t),U^\ep_2(x,t)):=(u^\ep_1(t;c),u^\ep_2(t;c))$ for all $x\in\R^n$. It is straightforward to verify that $(U^\ep_1(x,t),U^\ep_2(x,t))$ solves \eqref{wch}. Then Theorem \ref{thmwc} implies the convergence rate $O(\ep)$ for $(U^\ep_1(x,t),U^\ep_2(x,t))$. For the general case \eqref{wchx}, however, the best known rate is $O(\ep^{1/3})$, and it is an interesting open direction to investigate whether sharper rates can be achieved. In this context, the optimal control formula introduced in \cite{DSZ} may provide a useful tool.

\medskip

In \cite{Po}, the author explained how the problem \eqref{fmul} is connected to a class of free boundary problems arising in PDEs. Specifically, let $\Omega\subset \R^n$ and $T>0$, the author considered the following problem on $\Omega\times (0,T]$
\begin{equation}\label{free}
\begin{cases}
-\Delta U(x,t)=0,\quad &\text{in }\{U>0\},
\\ V_\nu(x,t)=g(\frac{x}{\ep},\frac{t}{\ep})|DU^+(x,t)|\quad &\text{on }\partial\{U>0\},
\\ U(0,t)=\psi(t)\in\Lip([0,\infty))\quad &\text{for }t>0,
\end{cases}
\end{equation}
where $V_\nu$ denotes the normal velocity of the free boundary $\partial\{U>0\}$, $DU^+$ represents the limits of the derivatives from the positive set of $U$, and $g$ is $\Z^{n+1}$-periodic. In the one-dimensional case, we take $\Omega=(0,\infty)$. Assume $g,\psi>0$ and $\psi$ is bounded. The free boundary $\partial\{U>0\}$ can be represented by a single moving point $u^\ep(t)\in(0,\infty)$, with initial condition $u^\ep(0)=c>0$. The normal velocity $V_\nu$ is then given by $\dot u^\ep(t)$. Since $g,\psi>0$, by the second equation in \eqref{free} we know that $\dot u^\ep(t)>0$ and $u^\ep(t)>c>0$. We observe from the first equation in \eqref{free} that $U(x,t)$ is linear in $x$. Since $u^\ep(t)$ is the free boundary, i.e., $U(u^\ep(t),t)=0$, we know that $|Du(u^\ep(t),t)|=\psi(t)/u^\ep(t)$. Then we obtain the following ODE for the motion of the free boundary
\begin{equation*}
\begin{cases}
\dot u^\ep(t)=g(\frac{u^\ep(t)}{\ep},\frac{t}{\ep})\frac{\psi(t)}{u^\ep(t)},
\\ u^\ep(0)=c>0,
\end{cases}
\end{equation*}
which is a special case of \eqref{fmul}. Since $u^\ep(t)>c$, it is clear that condition (f2) is satisfied.

\medskip

We now describe the connection between our model and gradient systems with wiggly energies. In \cite{M}, the author considered the gradient system
\begin{equation}\label{FA}
\dot u^\ep=-\nabla\bigg(F(u^\ep)+\ep A\Big(\frac{u^\ep}{\ep}\Big)\bigg)=-\nabla F(u^\ep)-\nabla A\Big(\frac{u^\ep}{\ep}\Big),
\end{equation}
where $F:\R^n\to\R$ and $A:\T^n\to\R$ are smooth functions. Moreover, the author in \cite{M} assumes that
\begin{equation}\label{F}
F(x)\to \infty\quad \text{as }\|x\|\to \infty.
\end{equation}
This system is motivated by various physical models, such as martensitic phase transformations, gas adsorption, dry friction, etc. As described in \cite{M}, one may imagine a light particle sliding down a rough slope. The macroscopic shape of the slope is modeled by a smooth potential $F(\cdot)$, which drives the overall downward motion. Superimposed on this slope is a rapidly oscillating perturbation $\ep A(\cdot/\ep)$, which encodes the small-scale surface roughness. The particle takes a jerky path downhill, possibly getting stuck along the way. In higher dimensions, one can in general only expect weak convergence of $u^\ep$ to the limiting trajectory $\ol u$. In \cite{M}, the author focuses on deriving the effective equation for $\ol u$ in the case $n=2$. As noted in that work, the effective equation may fail to be Lipschitz continuous. This phenomenon aligns with \eqref{modu00} below.

As an application of Theorem \ref{thm2}, we have the following result.
\begin{prop}
Assume $n=1$. Let $F\in C^2(\R)$ be a convex function satisfying \eqref{F}. Let $A\in C^2(\T)$. For each $u_0\in\R$, we consider the unique solution $u^\ep(t;c,u_0)$ of
\begin{equation*}
\begin{cases}
\dot u^\ep=-F'(u_0)-A'(\frac{u^\ep}{\ep}),\quad t>0,\\ u^\ep(0)=c\in\R.
\end{cases}
\end{equation*}
Then by Theorem \ref{thm1},
\[\lim_{\ep\to 0} u^\ep(t;c,u_0)=c+\bar f(u_0)t.\]
The function $\bar f$ is non-increasing, and admits a modulus of continuity locally. More precisely, for any $u_1,u_2\in U$ with $U\subset\R$ compact, we have
\begin{equation}\label{modu00}
|\bar f(u_1)-\bar f(u_2)|\leq \frac{2}{\Big\lfloor \frac{1}{\max\big\{\max\limits_{x\in U}|F''(x)|,\|A''\|_\infty\big\}}W\Big(\frac{1}{|u_1-u_2|}\Big)\Big\rfloor},
\end{equation}
Moreover, there is a unique solution $\overline{u}(t;c)$ of
\begin{equation*}
\begin{cases}
\dot{\overline{u}}=\bar f(\overline{u}),\quad t>0,\\ \overline{u}(0)=c\in\R.
\end{cases}
\end{equation*}
Given a bounded set $\mathcal K\subset \R$, and let $c\in \mathcal K$. Define
\[\mathcal F:=\Big\{x\in\R:\ F(x)\leq \sup_{c\in\mathcal K}F(c)+2\|A\|_\infty\Big\},\]
which is compact by \eqref{F}. Let $u^\ep(t;c)$ be the unique solution of \eqref{FA}. Then there is a constant $C>0$, depending only on $\|F'\|_{W^{1,\infty}(\mathcal F)}$ and $\|A'\|_{W^{1,\infty}(\R)}$, such that for all $\ep\in(0,1)$, we have
\[\big|u^\ep(t;c)-\overline{u}(t;c)\big|\leq \frac{Ct}{|\log \ep|},\quad t>\ep|\log \ep|,\]
and
\[\big|u^\ep(t;c)-\overline{u}(t;c)\big|\leq \min\Big\{C\ep,2\Big(\max_{x\in \mathcal F}|F'(x)|+\|A'\|_\infty\Big)t\Big\},\quad t\in[0,\ep|\log \ep|].\]
\end{prop}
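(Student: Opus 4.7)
The plan is to reduce the statement to a direct application of Theorem~\ref{thm2} after confining the dynamics to the compact set $\mathcal F$. In the notation of \eqref{fmul}, set $f(r,u):=-F'(u)-A'(r)$; this is independent of $\tau$ and $t$, is 1-periodic in $r$, and non-increasing in $u$ since $F''\geq 0$. Once $f$ is truncated so as to be globally bounded and Lipschitz, assumption (f2) is satisfied.

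The central step is trajectory confinement. The wiggly energy $E_\ep(u):=F(u)+\ep A(u/\ep)$ dissipates along \eqref{FA}:
\[\frac{d}{dt}E_\ep(u^\ep(t))=\bigl(F'(u^\ep)+A'(u^\ep/\ep)\bigr)\dot u^\ep=-|\dot u^\ep(t)|^2\leq 0,\]
so $F(u^\ep(t))\leq F(c)+\ep(A(c/\ep)-A(u^\ep(t)/\ep))\leq \sup_{c\in\mathcal K}F(c)+2\|A\|_\infty$ for $\ep\in(0,1]$, giving $u^\ep(t)\in\mathcal F$ for all $t\geq 0$. For the effective trajectory, a direct analysis of the frozen ODE $\dot v=-F'(u_0)-A'(v)$ shows that $\bar f(u_0)=0$ in the pinned regime $|F'(u_0)|\leq\|A'\|_\infty$ (where $v$ has an equilibrium) and that $\bar f(u_0)$ has sign opposite to $F'(u_0)$ otherwise. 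Hence $F'(u)\bar f(u)\leq 0$, $F(\overline u(t))$ is non-increasing in $t$, and $\overline u(t)\in\mathcal F$ as well.

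With both trajectories in $\mathcal F$, I would let $\pi$ be the 1-Lipschitz retraction of $\R$ onto $\mathcal F$ (which is a closed interval by convexity of $F$) and replace $f$ by $\tilde f(r,u):=-F'(\pi(u))-A'(r)$. Then $\tilde f$ is bounded and Lipschitz on $\R^2$, 1-periodic in $r$, and still non-increasing in $u$ because $F'\circ\pi$ is non-decreasing; moreover $\tilde f\equiv f$ on $\R\times\mathcal F$, so the solutions of \eqref{FA} and of the effective equation are unchanged when $f$ is replaced by $\tilde f$. Theorem~\ref{thm2} then delivers \eqref{t>d} with a constant $C$ depending only on $\|F'\|_{W^{1,\infty}(\mathcal F)}$ and $\|A'\|_{W^{1,\infty}(\R)}$, and combining \eqref{t<d} with Lemma~\ref{Ct} (applied to $\tilde f$) yields the short-time bound $\min\{C\ep,\,2(\max_{\mathcal F}|F'|+\|A'\|_\infty)t\}$.

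The modulus estimate \eqref{modu00} comes from running Lemma~\ref{mc} on the frozen ODE with parameters $u_1,u_2\in U$: the Lipschitz constant of $-F'(u_0)-A'(v)$ in $v$ is $\|A''\|_\infty$ and in $u_0$ (restricted to $U$) is $\max_U|F''|$, so taking $\kappa:=\max\{\max_U|F''|,\|A''\|_\infty\}$ and setting $t_1=t_2$ in \eqref{modu} gives \eqref{modu00}. Monotonicity of $\bar f$ follows from the comparison principle for the frozen ODE, and together with continuity of $\bar f$ this yields uniqueness of $\overline u$ by the standard comparison argument for non-increasing vector fields. The only genuinely new ingredient beyond what Theorem~\ref{thm2} already provides is the confinement step; once that is secured, the remaining pieces are direct invocations of the machinery built in Section~\ref{2.2}.
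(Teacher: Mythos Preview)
Your proposal is correct and matches the paper's proof: energy dissipation confines $u^\ep$ to $\mathcal F$, your retraction $\pi$ is precisely the paper's constant extension of $F'$ at the endpoints of the interval $\mathcal F$, and Theorem~\ref{thm2} is then applied to the truncated right-hand side. One small note: the pinned regime is $-F'(u_0)\in[\min A',\max A']$ rather than $|F'(u_0)|\le\|A'\|_\infty$, but the sign conclusion $F'(u)\bar f(u)\le 0$ that you actually use is unaffected; the paper instead obtains $\overline u\in\mathcal F$ more cheaply by passing to the limit in $u^\ep\in\mathcal F$ after the truncation.
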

\begin{proof}
Since $F$ is convex, $-F'$ is non-increasing. It follows that $\bar f$ is non-increasing. Repeating the proof of \eqref{modu}, we get \eqref{modu00}. It suffices to check that condition (f2) is satisfied. Noticing that \eqref{FA} is a gradient system, define $\Psi(x)=F(x)+\ep A(\frac{x}{\ep})$, one can easily check that $\frac{d}{dt}\Psi(u^\ep(t;c))\leq 0$. Therefore, for $\ep\in (0,1)$, 
\[F(u^\ep(t;c))\leq F(c)+\ep A\Big(\frac{c}{\ep}\Big)-\ep A\Big(\frac{u^\ep(t;c)}{\ep}\Big)\leq \sup_{c\in\mathcal K}F(c)+2\|A\|_\infty,\]
which implies that $u^\ep(t;c)\in\mathcal F$ for all $t>0$. Hence, we can modify the function $F'$ outside $\mathcal F$, to get a bounded function $\hat f$, without change of solutions. Since $F$ is convex, $F'$ is a non-decreasing function. Let $x_1$ (resp. $x_2$) be the left end point (resp. the right end point) of the compact set $\mathcal F$. Define
\[\hat f(x)=F'(x)\ \text{on }\mathcal F,\quad \hat f(x)=F'(x_1)\ \text{for }x<x_1,\quad \hat f(x)=F'(x_2)\ \text{for }x>x_2.\]
Then the function $-\hat f(u)-A'(r)$ satisfies (f2), and
\[\dot u^\ep(t;c)=-\hat f(u^\ep(t;c))-A'\Big(\frac{u^\ep(t;c)}{\ep}\Big).\]
It remains to check if the limit equation is unchanged. We recall that by definition, the value of $\bar f$ remains unchanged on $\mathcal F$. Since $u^\ep(t;c)\in \mathcal F$, the limit $\ol u(t;c)\in\mathcal F$. So $\ol u(t;c)$ solves $\dot{\ol u}=\bar f(\ol u)$. Then we apply Theorem \ref{thm2} to complete the proof.
\end{proof}

\section*{Acknowledgements}

The author is grateful to Professor Hung V. Tran for suggesting the problem and providing valuable guidance throughout the project, and to Professor Hiroyoshi Mitake for his insightful comments on the manuscript.

\section*{Declarations}

\noindent {\bf Conflict of interest statement:} The authors state that there is no conflict of interest.

\medskip

\noindent {\bf Data availability statement:} Data sharing not applicable to this article as no datasets were generated or analyzed during the current study.

\end{document}